\documentclass[a4paper,UKenglish,cleveref, autoref, thm-restate]{lipics-v2021}
\bibliographystyle{plainurl}

\title{Matching Cuts in Graphs of High Girth and $H$-Free Graphs}

\author{Carl Feghali}{University of Lyon, EnsL, CNRS, LIP, F-69342, Lyon Cedex 07, France}{carl.feghali@ens-lyon.fr}{0000-0001-6727-7213}{}
\author{Felicia Lucke}{Department of Informatics, University of Fribourg, Fribourg, Switzerland}{felicia.lucke@unifr.ch}{https://orcid.org/0000-0002-9860-2928}{}
\author{Daniël Paulusma}{Department of Computer Science, Durham University, Durham, UK}{daniel.paulusma@durham.ac.uk}{https://orcid.org/0000-0001-5945-9287}{}
\author{Bernard Ries}{Department of Informatics, University of Fribourg, Fribourg, Switzerland}{bernard.ries@unifr.ch}{https://orcid.org/0000-0003-4395-5547}{}

\authorrunning{C. Feghali and F. Lucke and D. Paulusma and B.Ries} 
\Copyright{Carl Feghali and Felicia Lucke and Dani\"el Paulusma and Bernard Ries}
\ccsdesc[100]{Mathematics of computing~Graph algorithms}
\keywords{matching cut; perfect matching; girth; $H$-free graph} 

\acknowledgements{We thank Hoang-Oanh Le for a significant simplification of our original proof of Theorem~\ref{thm:girth_MC}, which we simplified a bit further. We thank Van~Bang Le for observing the bound on the maximum degree in Theorem~\ref{thm:girth_MC}, solving an Open Problem Garden question.}

\nolinenumbers 
\hideLIPIcs

\usepackage{boxedminipage,tikz}
\usepackage{subcaption}
\usepackage{nicefrac}
\usetikzlibrary{decorations.pathreplacing, calligraphy}
\usetikzlibrary{decorations.pathmorphing}
\usetikzlibrary{arrows,automata}
\usetikzlibrary{arrows.meta}
\definecolor{nicered}{RGB}{204,0,0}
\definecolor{lightblue}{RGB}{153,204,255}
\definecolor{nicegreen}{RGB}{0,153,0}
 \tikzstyle{vertex}=[thin,circle,inner sep=0.cm, minimum size=1.7mm, fill=black, draw=black]
  \tikzstyle{svertex}=[thin,circle,inner sep=0.cm, minimum size=1.3mm, fill=black, draw=black]
 \tikzstyle{bvertex}=[thin,circle,inner sep=0.cm, minimum size=1.7mm, fill=lightblue, draw=lightblue]
 \tikzstyle{rvertex}=[thin,circle,inner sep=0.cm, minimum size=1.7mm, fill=nicered,draw=nicered]
 \tikzstyle{hedge}=[thick, draw = gray]
  \tikzstyle{edge}=[thick, draw = gray]
 \tikzstyle{medge}=[ultra thick, draw = black]
 \tikzstyle{gedge}=[ultra thick, draw = nicered]
 \tikzstyle{pedge}=[ultra thick, draw=lightblue]
  \tikzstyle{rededge}=[thick, draw = nicered]
 \tikzstyle{bluedge}=[thick, draw = lightblue]
 \tikzstyle{greenedge}=[thick, draw = nicegreen]
 \tikzstyle{edge}=[thick, draw = gray]
 \tikzstyle{br} = [decorate, ultra thick, decoration = {calligraphic brace}]
  \tikzstyle{wiggly} = [decorate, decoration = snake, thick, draw = gray,]
  
  \pgfdeclarelayer{background}
  \pgfsetlayers{background,main}

\newcommand{\NP}{{\sf NP}}

\newcommand{\ssi}{\subseteq_i}
\newcommand{\si}{\supseteq_i}

\DeclareMathOperator*{\dist}{dist}

\newtheorem{claim1}{Claim}[theorem]
\newtheorem{open}{Open Problem}

\begin{document}

\maketitle

\begin{abstract}
The {\sc (Perfect) Matching Cut} problem is to decide if a connected graph has a (perfect) matching that is also an edge cut. The {\sc Disconnected Perfect Matching} problem is to decide if a connected graph has a perfect matching that contains a matching cut. Both {\sc Matching Cut} and {\sc Disconnected Perfect Matching} are  \NP-complete for planar graphs of girth~$5$, whereas {\sc Perfect Matching Cut} is known to be \NP-complete even for subcubic bipartite graphs of arbitrarily large fixed girth. We prove that {\sc Matching Cut} and  {\sc Disconnected Perfect Matching} are also \NP-complete for bipartite graphs of arbitrarily large fixed girth and bounded maximum degree. Our result for {\sc Matching Cut} resolves a 20-year old open problem. We also show that the more general problem {\sc $d$-Cut}, for every fixed $d\geq 1$, is \NP-complete for bipartite graphs of arbitrarily large fixed girth and bounded maximum degree. Furthermore, we show that {\sc Matching Cut}, {\sc Perfect Matching Cut} and {\sc Disconnected Perfect Matching} are \NP-complete for $H$-free graphs whenever $H$ contains a connected component with two vertices of degree at least~$3$. Afterwards, we update the state-of-the-art summaries for $H$-free graphs and compare them with each other, and with a known and full classification of the {\sc Maximum Matching Cut} problem,  which is to determine a largest matching cut of a graph~$G$. Finally, by combining existing results, we obtain a complete complexity classification of {\sc Perfect Matching Cut} for ${\cal H}$-subgraph-free graphs where ${\cal H}$ is any finite set of graphs.
\end{abstract}

\section{Introduction}\label{s-intro}

We consider classic graph problems for finding certain edge cuts, which 
have in common that their edges must form a matching. In order to explain this, let $G=(V,E)$ be a connected graph. A set $M\subseteq E$ is a {\it matching} of $G$ if no two edges in $M$ share an end-vertex; $M$ is {\it perfect} if every vertex of $G$ is incident to an edge of $M$. A set $M\subseteq E$ is an {\it edge cut} of $G$ if $V$ can be partitioned into two sets $B$ and $R$, such that $M$ consists of all the edges with one end-vertex in $B$ and the other one in $R$. We say that $M$ is a {\it (perfect) matching cut} of $G$ if $M$ is a (perfect) matching that is also an edge cut. We refer to Figure~\ref{f-examples} for some examples.

Graphs with matching cuts were introduced in 1970 by Graham~\cite{Gr70} as {\it decomposable} graphs. Matching cuts have applications in number theory~\cite{Gr70}, graph drawing~\cite{PP01}, graph homomorphisms~\cite{GPS12}, edge labelings~\cite{ACGH12} and ILFI networks~\cite{FP82}. 
Moreover, a connected graph with no vertex of degree~$1$ has a matching cut if and only if its line graph has a vertex cut that is an independent set ({\it stable cut set}).
As such, (perfect) matching cuts are well studied in the literature. 

Instead of considering perfect matchings that {\it are} edge cuts, we can also consider perfect matchings in graphs that {\it contain} edge cuts. Such perfect matchings are called {\it disconnected perfect matchings}; see Figure~\ref{f-examples} again. Note that every perfect matching cut is a disconnected perfect matching. However, there exist connected graphs, like the cycle $C_6$ on six vertices, that have a disconnected perfect matching (and thus a matching cut) but no perfect matching cut. There also exist connected graphs, like the path $P_3$ on three vertices, that have a matching cut, but no disconnected perfect matching (and thus no perfect matching cut either).

The problems {\sc Matching Cut}, {\sc Disconnected Perfect Matching} and {\sc Perfect Matching} are to decide if a connected graph has a matching cut, disconnected perfect matching or perfect matching cut, respectively. As explained below, all three problems are \NP-complete, and have been extensively studied for special graph classes.

\begin{figure}[t]
\centering
\begin{tikzpicture}
\tikzstyle{cutedge}=[black, ultra thick]
\tikzstyle{matchedge}=[black, ultra thick, dotted]

\begin{scope}[shift={(10,0)}, scale=0.75]
\node[rvertex](p1) at (0,0){};
\node[bvertex](p2) at (1,0){};
\node[bvertex](p3) at (2,0){};
\node[rvertex](p4) at (3,0){};
\node[rvertex](p5) at (4,0){};
\node[bvertex](p6) at (5,0){};

\draw[cutedge](p1)--(p2);
\draw[hedge](p2)--(p3);
\draw[cutedge](p3)--(p4);
\draw[hedge](p4)--(p5);
\draw[cutedge](p5)--(p6);
\end{scope}

\begin{scope}[shift={(5,0)}, scale=0.75]
\node[rvertex](p1) at (0,0){};
\node[bvertex](p2) at (1,0){};
\node[bvertex](p3) at (2,0){};
\node[bvertex](p4) at (3,0){};
\node[bvertex](p5) at (4,0){};
\node[rvertex](p6) at (5,0){};

\draw[cutedge](p1)--(p2);
\draw[hedge](p2)--(p3);
\draw[hedge](p3)--(p4);
\draw[hedge](p4)--(p5);
\draw[cutedge](p5)--(p6);
\end{scope}

\begin{scope}[shift={(0,0)}, scale=0.75]
\node[rvertex](p1) at (0,0){};
\node[bvertex](p2) at (1,0){};
\node[bvertex](p3) at (2,0){};
\node[bvertex](p4) at (3,0){};
\node[rvertex](p5) at (4,0){};
\node[rvertex](p6) at (5,0){};

\draw[cutedge](p1)--(p2);
\draw[hedge](p2)--(p3);
\draw[hedge](p3)--(p4);
\draw[cutedge](p4)--(p5);
\draw[hedge](p5)--(p6);
\end{scope}
\end{tikzpicture}
\caption{The graph $P_6$ from~\cite{LPR22b} with a matching cut that is not contained in a disconnected perfect matching (left), a matching cut that is properly contained in a disconnected perfect matching (middle) and a perfect matching cut (right). In each figure, thick edges denote matching cut edges.}\label{f-examples}
\end{figure}
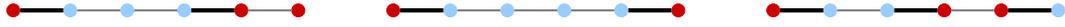

\medskip
\noindent
{\bf Our Focus.}
 The {\it girth} of a graph that is not a forest is the number of edges of a shortest cycle in it; a forest has infinite girth. In 2003, Bonsma~\cite{Bo09} asked if {\sc Matching Cut} is still \NP-complete for graphs of large girth and proved that this is indeed the case for planar graphs of girth~$5$. In the 2009 journal version of~\cite{Bo09}, Bonsma showed that every connected planar graph of girth at least~$6$ has a matching cut. Hence, the complexity status of {\sc Matching Cut} for graphs of large girth remained unknown and was regularly posed as an open problem~\cite{CHLLP21,LL19,LT21,LPR22a}.

Bouquet and Picouleau~\cite{BP} proved that {\sc Disconnected Perfect Matching} is \NP-complete for planar graphs of girth~$g=5$ and left the cases where $g\geq 6$ open.
In contrast, Le and Telle~\cite{LT21} proved that for every $g\geq 3$, {\sc Perfect Matching Cut} is \NP-complete for subcubic bipartite graphs of girth at least $g$ (a graph is {\it subcubic} if it has maximum degree at most~$3$).
We focus on the two remaining open problems:

\medskip
\noindent
{\it What is the complexity of {\sc Matching Cut} and {\sc Disconnected Perfect Matching} for graphs of large girth?}

\medskip
\noindent
A challenging task is to find gadgets with no edges that are subdivided twice; such gadgets always have a matching cut (take the two subdivision vertices on one side and all other vertices on the other) and cannot be used in any hardness reduction. 

\subsection{Other Relevant Known Results}\label{s-known}

We restrict ourselves to {\it hereditary} graph classes, that is, classes of graphs closed under vertex deletion. A class of graphs~${\cal G}$ is hereditary if and only if the graphs in ${\cal G}$ are {\it ${\cal F}_{\cal G}$-free} for some unique set ${\cal F}_{\cal G}$, that is, they do not contain any graph from ${\cal F}_{\cal G}$ as an induced subgraph. For a {\it systematic} study, one may start with {\it $H$-free} graphs (so where ${\cal F}_{\cal G}$ has a single graph~$H$).

\medskip
\noindent
{\bf Matching Cuts.}
Chv\'atal~\cite{Ch84} proved that {\sc Matching Cut} is \NP-complete even for $K_{1,4}$-free graphs of maximum degree~$4$ (the graph $K_{1,r}$ is the $(r+1)$-vertex star); see~\cite{PP01} for an alternative hardness proof. In contrast, Chv\'atal~\cite{Ch84} also proved that {\sc Matching Cut} is polynomial-time solvable for graphs of maximum degree at most~$3$, whereas Bonsma~\cite{Bo09} proved the same for $K_{1,3}$-free graphs, thereby generalizing a known result of Moshi~\cite{Mo89} for line graphs, and also for $P_4$-free graphs; the latter result was extended to $P_5$-free graphs in~\cite{Fe23} and to $P_6$-free graphs in~\cite{LPR22a}.
Kratsch and Le~\cite{KL16} proved polynomial-time solvability for $(K_{1,4},K_{1,4}+e)$-free graphs.
It is also known that if {\sc Matching Cut} is polynomial-time solvable for $H$-free graphs for some graph~$H$, then it is so for $(H+P_3)$-free graphs~\cite{LPR22a}; for two vertex-disjoint graphs $G_1$ and $G_2$, we write $G_1+G_2=(V(G_1)\cup V(G_2),E(G_1)\cup E(G_2))$. 

Moshi~\cite{Mo89} proved that {\sc Matching Cut} is \NP-complete even for bipartite graphs where the vertices in one set of the bipartition all have degree exactly~$2$. Consequently,
{\sc Matching Cut} is \NP-complete for $(H_1^*,H_3^*,H_5^*,\ldots)$-free bipartite graphs, where $H_1^*=H^*$ denotes the graph that looks like the letter ``H'', and for $i\geq 2$, the graph $H_i^*$ is the graph obtained from $H^*=H_1^*$ by subdividing the middle edge of $H_1^*$ exactly $i-1$ times; see also Figure~\ref{fig-Hstar}.

Le and Randerath~\cite{LR03} proved that {\sc Matching Cut} is \NP-complete for $K_{1,5}$-free bipartite graphs and thus for $C_s$-free graphs if $s$ is odd. Recall that Bonsma~\cite{Bo09} proved that {\sc Matching Cut} is \NP-complete for planar graphs of girth~$5$, and thus for $(C_3,C_4)$-free graphs. By using the graph transformation of Moshi~\cite{Mo89}, {\sc Matching Cut} is \NP-complete for $C_s$-free graphs also if $s$ is even and at least~$6$~\cite{LPR22a}.
 In~\cite{LPR22b} it was shown that {\sc Matching Cut} is \NP-complete even for $(3P_5,P_{15})$-free graphs, strengthening a result of~\cite{Fe23}. 
 Afterwards, Le and Le~\cite{LL23} proved that {\sc Matching Cut} is \NP-complete even for $(3P_6,2P_7,P_{14})$-free graphs.
 
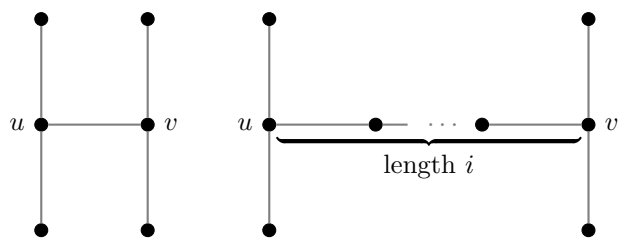
\begin{figure}[t]
\centering
\begin{tikzpicture}
\begin{scope}[scale=1.4]
	\node[vertex] (v1) at (0,2){};
	\node[vertex, label=left:$u$] (v2) at (0,1){};
	\node[vertex] (v3) at (0,0){};
	\node[vertex] (v4) at (1,2){};
	\node[vertex, label=right:$v$] (v5) at (1,1){};
	\node[vertex] (v6) at (1,0){};
	
	\draw[edge](v1)--(v2);
	\draw[edge](v2)--(v5);
	\draw[edge](v2)--(v3);
	\draw[edge](v4)--(v5);
	\draw[edge](v5)--(v6);
	
\end{scope}

\begin{scope}[shift = {(3,0)},scale=1.4]
	\node[vertex] (v1) at (0,2){};
	\node[vertex, label=left:$u$] (v2) at (0,1){};
	\node[vertex] (v3) at (0,0){};
	\node[vertex] (v4) at (3,2){};
	\node[vertex, label=right:$v$] (v5) at (3,1){};
	\node[vertex] (v6) at (3,0){};
	
	\node[vertex] (u1) at (1,1){};
	\node[vertex] (u2) at (2,1){};
	
	\draw[edge](v1)--(v2);
	\draw[edge](v2)--(v3);
	\draw[edge](v4)--(v5);
	\draw[edge](v5)--(v6);
	\draw[edge](v2)--(u1);
	\draw[edge](u2)--(v5);
	\draw[edge](u1)--(1.3,1);
	\node[color = gray](dots) at (1.65,1){$\dots$};
	
	\draw[br](2.925,0.875)--(0.075,0.875);
	\node[](i) at (1.5, 0.6){length $i$};
	
\end{scope}
\end{tikzpicture}
\caption{The graphs $H^* = H_1^*$ (left) and $H_i^*$ (right).}\label{fig-Hstar} 
\end{figure}

We refer to~\cite{BJ08,LL19,LPR22a} for results for non-hereditary graph classes, to~\cite{AKK22,GKKL22,GS21,KKL20,KL16} for parameterized complexity results and exact algorithms, to~\cite{AS21,GS21} for a generalization of  {\sc Matching Cut} to {\sc $d$-Cut} 
(where instead of at most one neighbour we allow each vertex to have at most $d$ neighbours across the cut)
and to~\cite{CHLLP21} for a  comprehensive overview.

\medskip
\noindent
{\bf Disconnected Perfect Matchings.}
The {\sc Disconnected Perfect Matching} problem was introduced by Bouquet and Picouleau~\cite{BP}. They used a different name but we adapt the name of Le and Telle~\cite{LT21} to avoid confusion with {\sc Perfect Matching Cut}.  Bouquet and Picouleau~\cite{BP} showed that {\sc Disconnected Perfect Matching} is, among others, polynomial-time solvable for $K_{1,3}$-free graphs and $P_5$-free graphs, but \NP-complete for $K_{1,4}$-free planar graphs, planar graphs of girth~$5$ and for bipartite graphs, and thus for $C_s$-free graphs for every odd $s$. 
Le and Le~\cite{LL23} proved that {\sc Disconnected Perfect Matching} is \NP-complete even for $(3P_6,2P_7,P_{14})$-free graphs, which improved the previous \NP-completeness result of~\cite{LPR22b} for $(3P_7,P_{19})$-free graphs. 

\medskip
\noindent
{\bf Perfect Matching Cuts.}
The {\sc Perfect Matching Cut} problem was first shown to be \NP-complete by Heggernes and Telle~\cite{HT98}.
Le and Telle~\cite{LT21} proved, besides \NP-completeness for subcubic bipartite graphs of girth at least $g$ (for any $g\geq 3$), 
that {\sc Perfect Matching Cut} is polynomial-time solvable for chordal graphs and for $S_{1,2,2}$-free graphs; the graph $S_{1,2,2}$ is obtained by subdividing two of the edges of the claw $K_{1,3}$ exactly once. In~\cite{LPR22b}, it was shown that {\sc Perfect Matching Cut} is polynomial-time solvable for $P_6$-free graphs, and moreover for $(H+P_4)$-free graphs if it is polynomial-time solvable for $H$-free graphs. 

Even more recently, two new results were shown.
Le and Le~\cite{LL23} proved that {\sc Perfect Matching Cut} is \NP-complete even for $(3P_6,2P_7,P_{14})$-free graphs (using the same construction as for {\sc Matching Cut} and {\sc Disconnected Perfect Matching}), whereas Bonnet, Chakraborty and Duron~\cite{BCD23} proved that  {\sc Perfect Matching Cut} is \NP-complete for $3$-connected cubic bipartite planar graphs.

\subsection{Our Results}\label{s-her}

As our main results, we solve, in Section~\ref{s-girth}, the aforementioned two open problems in the literature~\cite{Bo09,BP,CHLLP21,LL19,LT21,LPR22a}
by showing that for every $g\geq 3$, {\sc Matching Cut} is \NP-complete for bipartite graphs of girth at least~$g$ and maximum degree at most~$60$ and {\sc Disconnected Perfect Matching} is \NP-complete for bipartite graphs of girth at least~$g$ and maximum degree at most~$74$. 
Note that our first result answers, in the negative, a question~\cite{open} from the {\it Open Problem Garden}:

\medskip
\noindent
{\it ``For every $d$ does there exists a $g$ such that every graph with average degree smaller than $d$ and girth at least $g$ has a matching-cut?''}

\medskip
\noindent
As an immediate consequence of our second result, we have that {\sc Disconnected Perfect Matching} is \NP-complete for $C_s$-free graphs for even~$s$ (as mentioned above, previously this was only known for odd $s$~\cite{BP}).
To overcome the obstacle that connected graphs with a $2$-subdivided edge have a matching cut, we use results from the theory of expander graphs.
The proof of our result on {\sc Matching Cut} is surprisingly short. 
Moreover, we can extend it in a straightforward way to show the following. For every $d\geq 2$ and $g\geq 3$, there 
exists a function~$f(d)$ that only depends on $d$, such that even {\sc $d$-Cut} is \NP-complete for bipartite graphs of girth at least~$g$ and maximum degree at most $f(d)$; recall that {\sc $1$-Cut} is the {\sc Matching Cut} problem.

In Section~\ref{s-girth}, we also highlight an implicit result of Le and Telle~\cite{LT21} for {\sc Perfect Matching Cut}. In their \NP-hardness proof for subcubic bipartite graphs of high girth, they showed that a graph~$G$ has a {\it perfect} matching cut if and only if the graph obtained from~$G$ by subdividing some edge four times has a {\it perfect} matching cut. Since graphs with a $2$-subdivided edge have a matching cut, this property shows a fundamental difference between matching cuts and perfect matching cuts.

For $i\geq 2$, recall from Figure~\ref{fig-Hstar} that $H_i^*$ is the graph obtained from $H^*=H_1^*$ by subdividing the middle edge of $H_1^*$ exactly $i-1$ times.
Recall also that a result of Moshi~\cite{Mo89} implies that {\sc Matching Cut} is \NP-complete for $(H_1^*,H_3^*,H_5^*,\ldots)$-free bipartite graphs.
In Section~\ref{s-h}, we extend this result by
proving that for every $i\geq 1$, {\sc Matching Cut} and {\sc Disconnected Perfect Matching}  are \NP-complete for  $(H_1^*,\ldots,H_i^*)$-free graphs. 
We obtain these results by replacing the gadget of Moshi~\cite{Mo89} with more advanced graph transformations.
In Section~\ref{s-h}, we also make explicit that the construction of Le and Telle~\cite{LT21} 
for subcubic bipartite graphs of girth at least~$g$ gives in fact \NP-completeness of {\sc Perfect Matching Cut} for $(H_1^*,\ldots,H_i^*)$-free subcubic bipartite graphs of girth at least~$g$, for any $g\geq 3$.
Hence, all three problems are \NP-complete for $H$-free graphs whenever $H$ has a connected component with two vertices of degree at least~$3$.

In Section~\ref{s-con}, we 
combine our new results with the known results. We 
update the state-of-the-art summaries for the three problems on $H$-free graphs;
basically, for all three problems, we only need to consider cases where $H$ is a linear forest. 
Apart from comparing the (partial) classification of the three problems with each other, we also compare them with a recent complete classification of the {\sc Maximum Matching Cut} problem for $H$-free graphs~\cite{LPR23}. 
This problem is to determine a matching cut in a graph~$G$ with a maximum number of edges (or output that no matching cut exists in $G$).
Finally, we show how the results for {\sc Perfect Matching Cut} lead to a full classification of {\sc Perfect Matching Cut} on ${\cal H}$-subgraph-free graphs, for every finite set of graphs~${\cal H}$.

\section{Preliminaries}\label{s-pre}

We only consider finite, undirected graphs without multiple edges and self-loops. 
Let $G=(V,E)$ be a graph. For $u\in V$, the set $N(u)=\{v \in V\; |\; uv\in E\}$ is the {\it neighbourhood} of $u$ in $G$, where $|N(u)|$ is the {\it degree} of $u$. 
For an integer~$p\geq 0$, $G$ is $p$-regular if every $u\in V$ has degree~$p$.
Let $S\subseteq V$. The {\it neighbourhood} of $S$ is the set $N(S)=\bigcup_{u\in S}N(u)\setminus S$. 
The graph $G[S]$ is the subgraph of~$G$ {\it induced} by $S\subseteq V$, that is, $G[S]$ is the graph obtained from~$G$ after deleting the vertices not in $S$. We write $G-S=G[V\setminus S]$.
Let $u,v\in V$. The {\it distance} between $u$ and $v$ in~$G$ is the {\it length} (number of edges) of a shortest path between $u$ and $v$ in $G$. 
The subdivision of an edge $e=uv$ of $G$ replaces $e$ by a new vertex $w$ and edges $uw$ and $wv$.

We will now define some useful colouring terminology for matching cuts used in other papers as well (see e.g.~\cite{LPR22b}). In the remainder of this section, we let $G=(V,E)$ be a connected graph.
A {\it red-blue colouring} of $G$ colours every vertex of $G$ either red or blue. If every vertex of some set $S\subseteq V$ has the same colour (red or blue), then $S$ is said to be {\it monochromatic}. We also say that $G[S]$ is {\it monochromatic}.
A red-blue colouring of $G$ is {\it valid} if the following holds:

\begin{enumerate}
\item every blue vertex has at most one red neighbour;
\item every red vertex has at most one blue neighbour; and
\item both colours red and blue are used at least once.  
\end{enumerate}

\noindent
If a red vertex $u$ in $G$ has a blue neighbour $v$, then $u$ and $v$ are said to be \textit{matched}. 
See Figure~\ref{f-examples} for three examples of valid red-blue colourings of the $P_6$.

For a valid red-blue colouring of $G$, we let $R$ be the {\it red} set consisting of all vertices coloured red and $B$ be the {\it blue} set consisting of all vertices coloured blue. Note that $V=R\cup B$. The {\it red interface} is the set $R'\subseteq R$ consisting of all vertices in $R$ with a (unique) blue neighbour, and the {\it blue interface} is the set $B'\subseteq B$ consisting of all vertices in~$B$ with a (unique) red neighbour in $R$. 

A red-blue colouring of~$G$ is {\it perfect} if it is valid and moreover $R'=R$ and $B'=B$; see Figure~\ref{f-examples} (middle) for an example of a perfect red-blue colouring (of the $P_6$). A red-blue colouring of $G$ is {\it perfect-extendable} if it is valid and $G[R\setminus R']$ and $G[B \setminus B']$ both contain a perfect matching; see Figure~\ref{f-examples} (right) for an example of a perfect-extendable red-blue colouring (of the $P_6$).
 In other words, the matching defined by the edges with one end-vertex in $R'$ and the other one in $B'$ can be extended to a perfect matching in $G$ or, equivalently, is contained in a perfect matching in $G$.

We now make the following straightforward observation (see also e.g.~\cite{LPR22b}).

\begin{observation}\label{o} Let $G$ be a connected graph. The following three statements hold:
\begin{itemize}
\item [(i)] $G$ has a matching cut if and only if $G$ has a valid red-blue colouring;
\item [(ii)] $G$ has a disconnected perfect matching if and only if $G$ has a perfect-extendable red-blue colouring;
\item [(iii)] $G$ has a perfect matching cut if and only if $G$ has a perfect red-blue colouring.
\end{itemize}
\end{observation}

\noindent
Finally, we formally define the notion of a $d$-cut. For an integer~$d\geq 1$ and a connected graph $G=(V,E)$, a set $M\subseteq E$ is a {\it $d$-cut} of $G$ if $V$ can be partitioned into two sets $B$ and $R$, such that the following two conditions hold:

\begin{itemize}
\item [(i)] $M$ consists of all the edges with one end-vertex in $B$ and the other one in $R$; and
\item [(ii)] every vertex in $B$ has at most $d$ neighbours in $R$, and vice versa.\\[-10pt]
\end{itemize}

\noindent
Recall that the corresponding {\sc $d$-Cut} problem is to decide if a connected graph has a $d$-cut. Hence, {\sc $1$-Cut} and {\sc Matching Cut} are the same problems.

\section{Hardness for Arbitrary Given Girth}\label{s-girth}

In this section, we will show that {\sc Matching Cut} (Section~\ref{s-31}) and {\sc Disconnected Perfect Matching} (Section~\ref{s-32}) remain \NP-complete even for graphs of high girth and bounded maximum degree; recall that previously both problems were known to be \NP-complete for (planar) graphs of girth at least~$g$, only for $g\leq 5$~\cite{Bo09,BP}. 
We also briefly discuss how an implicit observation of Le and Telle~\cite{LT21} gives
a simple, alternative proof for showing \NP-completeness for {\sc Perfect Matching Cut} (Section~\ref{s-33}) for graphs of high girth.

We need the following notions.
The {\it edge expansion} $h(G)$ of a graph $G=(V,E)$ on $n$~vertices is defined as
$$h(G) = \min_{1 \leq |S| \leq \frac{n}{2}} \frac{|\partial S|}{|S|},$$
where $\partial S := \{uv \in E\; |\; u \in S, v \in V \setminus S\}$
is the set of all edges of $G$ with one end-vertex in~$S$ and one end-vertex outside $S$.

A connected graph~$G$  is said to be \emph{(matching-)immune} if $G$ admits no matching cut. We generalize this notion as follows.
For an integer $d\geq 1$, we say that a connected graph $G$ is {\it $d$-immune} if $G$ admits no $d$-cut, so being $1$-immune is the same as being immune.
We make the following observation.

\begin{observation}\label{o-mi}
For every integer $d\geq 1$, every connected graph $G$ with $h(G)> d$ is $d$-immune.
\end{observation}

\begin{proof}
Let $d\geq 1$, and let $G$ be a connected graph with $h(G)> d$. For every set $S \subseteq V$ with 
$1 \leq |S| \leq \frac{n}{2}$, the average number of neighbours the vertices of $S$ have in $V\setminus S$ is strictly larger than~$d$. Hence, $G$ cannot have a $d$-cut. In other words, $G$ is $d$-immune.
\end{proof}

\subsection{Matching Cut and d-Cut}\label{s-31}

In order to prove our hardness results for {\sc Matching Cut} and {\sc $d$-Cut} for bipartite graphs of high girth and bounded maximum degree, we use known results on expander graphs and number theory.

Two integers $a$ and $b$ are \emph{coprime}, if they do not have a common divisor greater than~$1$. The following result is well known.

\begin{theorem}[\cite{LD37orig}]\label{thm:dirichlet}
For two positive, coprime integers $a$ and $b$, the sequence $a +bk$, for $ k \in \mathbb{N}$ contains infinitely many primes.
\end{theorem}

\noindent
For an integer $a$ and a prime $p$, the \textit{Legendre symbol} is defined as $\left(\frac{a}{p}\right) \equiv a^{\frac{p-1}{2}} \bmod p$. It is well known (see, for example,~\cite{Sh08}) that for any integer~$a$ and prime~$p$, it holds that $\left(\frac{a}{p}\right) \equiv 0\bmod p$, or $1 \bmod p$, or $(p-1)\bmod p$, and with slight abuse of notation one denotes these integers by $0$, $1$ and $-1$, respectively.
 
We use Theorem~\ref{thm:dirichlet} to prove the following lemma.

\begin{lemma}\label{l-suitableprimes}
There are infinitely many primes $q$, such that
\begin{itemize}
\item[a)] $ q > 13$, 
\item[b)] $q \equiv 1 \bmod 4$, and 
\item[c)] the Legendre symbol $ \left(\frac{q}{13}\right) = -1$.
\end{itemize}
\end{lemma}

\begin{proof}
We  apply Theorem~\ref{thm:dirichlet} for $a = 5$ and $b = 52$, which are coprime. This yields that the sequence $5+52k$, for $k \in \mathbb{N}$, contains infinitely many primes, and consequently,  infinitely many primes satisfying a). Since for any $k \in \mathbb{N}$, we have $5+52k \equiv 1 \bmod 4$, condition~b) is also satisfied. 

Finally, we will show that the Legendre symbol $\left(\frac{5 + 52k}{13}\right)= -1$. First we note that 
if $a \equiv  b \bmod p$, then $\left(\frac{a}{p}\right) = \left(\frac{b}{p}\right)$ (see~\cite{Sh08}). Hence, since $5 \equiv 5+52k \bmod 13$, we have $\left(\frac{5}{13}\right) = \left(\frac{5 + 52k}{13}\right)$. We now deduce that $\left(\frac{5}{13}\right) \equiv 5^{\frac{13-1}{2}} \bmod 13 \equiv 12 \bmod13 \equiv -1 \bmod 13$. This proves condition~c).
\end{proof}

\noindent
Lemma~\ref{lem:girth_matching_immune} uses known results from the theory of expander graphs. We need the fact that the graph in the statement of Lemma~\ref{lem:girth_matching_immune} has a perfect matching only in Section~\ref{s-32}. 

\begin{lemma} \label{lem:girth_matching_immune}
For every $g \geq 3$, there is an immune $14$-regular bipartite graph with girth at least $g$ that contains a perfect matching.
\end{lemma}

\begin{proof}
It follows from a construction of Lubotzky, Phillips and Sarnak~\cite{LPS88} based on Caley graphs that for every two primes $p$ and $q$ with the following four properties

\begin{itemize}
\item $q > p$, 
\item $p \equiv 1 \bmod 4$,
\item $q \equiv 1 \bmod 4$, and 
\item $\left(\frac{p}{q}\right) = -1$,
\end{itemize} 

\noindent
there exists a $(p+1)$-regular bipartite graph $G$ with $\lambda_2(G) \leq 2 \sqrt{p}$ and girth at least $4\log_p q-\log_p 4$; here, $\lambda_2(G)$ denotes the second largest eigenvalue of the adjacency matrix of $G$.

We set $p=13$, so $p\equiv 1 \bmod 4$. We now combine the above with Lemma~\ref{l-suitableprimes} to find that there exist infinitely many primes $q$, such that there exists a $14$-regular bipartite graph~$G_q$ with $\lambda_2(G_q) \leq 2 \sqrt{13}$ and girth at least $4\log_{13} q-\log_{13} 4$. Moreover, Dodziuk \cite{Do84} and, independently, Alon and Milman~\cite{AM85} showed that for every integer~$\ell\geq 1$, every $\ell$-regular graph~$G$ satisfies $h(G) \geq \frac{1}{2}(\ell - \lambda_2(G))$. This means that
 $$h(G_q) \geq \frac{1}{2}(14 - \lambda_2(G_q))\geq \frac{1}{2}(14 -2 \sqrt{13}) \approx 3.39 >1.$$
 Hence, by applying Observation~\ref{o-mi}, we find that $G_q$ is immune.
 
By taking $q$ sufficiently large, we conclude that for any $g\geq 3$, there exists an immune $14$-regular bipartite graph $G$ with girth at least $g$.
Finally, as $G$ is bipartite and regular, we find that $G$ has a perfect matching (due to Hall's Marriage Theorem). 
\end{proof}

\noindent
{\bf Remark~1.}
The arguments that we used to prove Lemma~\ref{lem:girth_matching_immune} do not allow us to set $p=13$ to a smaller value. We need $p$ to be prime, and moreover, it must hold that $p\equiv 1\bmod 4$. Hence, the only  alternative value for $p$ that is smaller than $13$ would be $p=5$. However, $p=5$ yields 
$h(G_q) \geq \frac{1}{2}(6 -2 \sqrt{5}) \approx 0.76$, so we cannot conclude from this that $G_q$ is immune.

\medskip
\noindent
We use Lemma~\ref{lem:girth_matching_immune} in the proof of our first main result.

\begin{figure}[t]
\begin{center}
\begin{tikzpicture}
\begin{scope}[xscale=1.2]
	\node[vertex, label=above:\strut $x$](x) at (0,0){};
	\node[vertex, label=above:\strut $y$](y) at (2,0){};
	
	\node[](text) at (-0.75,0){$F$};
	
	\draw[ultra thick, black](x) to [bend left = 30] (y);
	\draw[ultra thick, black](x) to [bend right = 30] (y);
	
\end{scope}

\begin{scope}[shift = {(6.75,0.5)}, xscale = 1.2]
	\node[vertex, label=above:\strut $x_1$](x1) at (0,0){};
	\node[vertex, label=above:\strut $y_1$](y1) at (2,0){};
	
	\node[vertex, label=below:\strut $x_2$](x2) at (0,-1){};
	\node[vertex, label=below:\strut $y_2$](y2) at (2,-1){};
	
	\node[](text) at (-0.75,-0.5){$F'$};
	
	\draw[ultra thick, black](x1) to [bend left = 30] (y1);
	\draw[ultra thick, black](x1) to [bend right = 30] (y1);
	
	\draw[ultra thick, black](x2) to [bend left = 30] (y2);
	\draw[ultra thick, black](x2) to [bend right = 30] (y2);
	
	\draw[edge](x1)--(x2);
	\draw[edge](y1)--(y2);
	
\end{scope}

\end{tikzpicture}
\end{center}
\caption{The graph $F$ with designated vertices $x$ and $y$ at distance at least $g$ (left) and the graph~$F'$ (right) from the proof of Theorem \ref{thm:girth_MC}.}\label{fig:bad}
\end{figure}
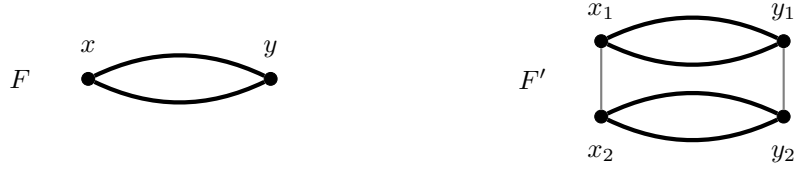

\begin{theorem}\label{thm:girth_MC}
For every integer $g \geq 3$, {\sc Matching Cut} is \NP-complete for bipartite graphs of girth at least~$g$ and maximum degree at most~$60$.
\end{theorem}

\begin{proof}
Let $g\geq 3$. As the class of graphs of girth at least $g+1$ is a subclass of the class of graphs of girth at least~$g$, we may assume without loss of generality that $g$ is divisible by~$2$ but not by~$4$, so $\frac{g}{2}$ is odd. We reduce from {\sc Matching Cut}. Recall that {\sc Matching Cut} is \NP-complete for graphs of degree at most~$4$~\cite{Ch84}.  Let $G$ be a connected graph of maximum degree at most~$4$. From $G$ we construct a graph $G'$ with the required properties, but first we define an auxiliary graph $F'$.

By Lemma~\ref{lem:girth_matching_immune} there exists an immune $14$-regular bipartite graph $F$ with girth at least~$g$. 
Note that $F$ has constant size, so we can find $F$ in constant time.

Let $x$ and $y$ be two vertices of distance~$\frac{g}{2}$ in $F$. As $\frac{g}{2}$ is odd, $x$ and $y$ belong to opposite bipartition classes of $F$.
We take two copies $F_1$ and $F_2$ of $F$, and we add an edge between the two copies $x_1$ and $x_2$ of $x$ and an edge between the two copies $y_1$ and $y_2$ of~$y$. This yields a graph $F'$. As $F$ is bipartite and has girth at least $g$, we find that $F'$ is bipartite and has girth at least~$g$ as well. The construction also gives us that $x_1$ and $y_2$ belong to the same bipartition class of $F'$. See also Figure~\ref{thm:girth_MC}.

Now consider an edge $uv$ in $G$. The {\it $F'$-replacement} of $uv$ is obtained from the graphs~$G$ and $F'$ by identifying $u$ with $x_1$ and $v$ with $y_2$. We do an $F'$-replacement on every edge of~$G$. This yields the graph $G'$. 
Since $F'$ is bipartite such that the distance between $x_1$ and $y_2$ is even, we find that $G'$ is bipartite as well.
By construction, $G'$ has girth at least $g$ and maximum degree at most~$4\times (14+1) = 60$.

We claim that $G$ has a matching cut if and only if $G'$ has a matching cut. We prove this below, using  Observation~\ref{o}-(i) implicitly.

First suppose that $G$ has a matching cut, so $G$ has a valid red-blue colouring $c$. For every edge $uv$ in $G$ we do as follows.
Let $F'$ with copies $F_1$ and $F_2$ of $F$ be the corresponding $F'$-replacement applied on $uv$. 
If $u$ and $v$ have the same colour, then we colour every vertex of $V(F')\setminus \{u,v\}$ with that colour. 
If $u$ and $v$ are coloured differently, say $u$ is red and $v$ is blue, then we colour every vertex in $V(F_1)$ red and every vertex in $V(F_2)$ blue. 
This yields a valid red-blue colouring of $G'$. Hence, $G'$ has a matching cut.

Now suppose that $G'$ has a matching cut, so $G'$ has a valid red-blue colouring $c'$. As $F$ is immune, every copy of it in $G'$ is monochromatic. 
Thus, for two vertices $x_1, y_2 \in V(F')$ in some graph $F'$ in $G'$, it holds that $x_1$ and $y_2$ each have a neighbour in $F'$ of the opposite colour if and only if $x_1$ and $y_2$ have different colours in $G'$.
Hence, the restriction of $c'$ to $V(G)$ is a valid red-blue colouring of $G$. Hence, $G$ has a matching cut.
\end{proof}

\noindent
We now focus on the {\sc $d$-Cut} problem for arbitrary $d\geq 1$, and we show how Theorem~\ref{thm:girth_MC} can be generalized in a straightforward way. This requires us to replace Lemma~\ref{l-suitableprimes} by the following lemma.

\begin{lemma}\label{l-suitableprimes_dcut}
There are infinitely many primes $p$, such that
\begin{itemize}
\item [a)] $p\geq 13$,
\item [b)] $p \equiv 1 \bmod 4$, and
\item [c)] there exists an infinite set $Q_p$ such that the following holds for every $q\in Q_p$:  
\begin{itemize}
\item[i.] $q>p$ 
\item[ii.] $q \equiv 1 \bmod 4$, and 
\item[iii.] the Legendre symbol $ \left(\frac{q}{p}\right) = -1$.
\end{itemize}
\end{itemize}
\end{lemma}

\begin{proof}
We first apply Theorem~\ref{thm:dirichlet} to $a = 13$ and $b = 20$, which are coprime. In this way we find that the sequence $13+20k_1$, for $k_1 \in \mathbb{N}$, contains infinitely many primes $p\geq 13$. Hence, condition~a) is satisfied. Since $p=13+20k_1$ for some integer $k_1\geq 0$, we find that $p \equiv 1 \bmod 4$. Hence, condition~b) is satisfied as well.

We now show condition~c). We apply Theorem~\ref{thm:dirichlet} to $a = 5$ and $b = 4p$. In this way we find that the sequence $5+ 4pk_2$, for $k_2 \in \mathbb{N}$, contains infinitely many primes $q$, so in particular an infinite set $Q_p$ with $q>p$ for every $q\in Q_p$. This shows that condition~c)i is satisfied. For every $q\in Q_p$ it holds that $q=5+4pk_2$ for some $k_2\geq 0$ and thus $q \equiv 1 \bmod 4$. Hence, condition~c)ii is satisfied as well.

Finally, we will show condition~c)iii by proving that $\left(\frac{q}{p}\right)= -1$ holds for every $q\in Q_p$. First, we recall that 
if $a \equiv  b \bmod c$, then $\left(\frac{a}{c}\right) = \left(\frac{b}{c}\right)$ (see~\cite{Sh08}).
We will also use the law of reciprocity, of which several equivalent versions exist in the literature. We use the following partial version (which can be found in~\cite{Sh08}). For every two primes $a>2$ and $b>2$, it holds that
\[\left(\frac{a}{b}\right) =  \left(\frac{b}{a}\right)\; \text{if $a \equiv 1 \bmod 4$ or $b \equiv 1 \bmod 4$}.\]
Combining both properties of the Legendre symbol we can now deduce that
\[\begin{array}{lcl}
\displaystyle \left(\frac{q}{p}\right) & = &\displaystyle \left(\frac{5+4pk_2}{13+20k_1}\right)\\[12pt] 
& = &\displaystyle \left(\frac{5+4k_2(13+20k_1)}{13+20k_1}\right)\\[12pt]
&= &\displaystyle\left(\frac{5}{13+20k_1}\right)\\[12pt] 
&= &\displaystyle\left(\frac{13+20k_1}{5}\right) \\[12pt]
&= &\displaystyle\left(\frac{3}{5}\right)\\[12pt] 
&\equiv &-1 \bmod 3.
\end{array}\]
Thus, condition~c)iii is satisfied as well, and we completed the proof of the lemma.
\end{proof}

\noindent
We are now ready to generalize Theorem~\ref{thm:girth_MC}.

\begin{theorem}\label{t-gen}
For every integer $d\geq 1$ and every integer $g \geq 3$, there is a function~$f(d)$ that only depends on $d$, such that {\sc $d$-Cut} is \NP-complete for bipartite graphs of girth at least~$g$ and maximum degree at most $f(d)$.
\end{theorem}

\begin{proof}
Let $d\geq 1$.
By Observation~\ref{o-mi}, every graph $G$ with $h(G)> d$ is $d$-immune. Hence, we can construct a $(p+1)$-regular bipartite gadget~$F$ using the arguments from the proof of Lemma~\ref{lem:girth_matching_immune}, where Lemma~\ref{l-suitableprimes_dcut} plays the role of Lemma~\ref{l-suitableprimes}. That is, we can choose~$p$ such that 

\begin{itemize}
\item [(i)] $p$ is a prime congruent to $1 \bmod 4$, and
\item [(ii)] $\frac{1}{2}(p+1-\lambda_2(G))\geq \frac{1}{2}(p+1-2\sqrt{p})>d$.
\end{itemize}

\noindent
We now reduce from {\sc $d$-Cut}.  Gomes and Sau~\cite{GS21} proved that {\sc $d$-Cut} is \NP-hard for graphs of maximum degree at most $2d+2$. Hence, we may assume that the instance~$G$ from {\sc $d$-Cut} has maximum degree at most $2d+2$. By applying the arguments of the proof of Theorem~\ref{thm:girth_MC}, we can use $F$ and $G$ to construct for every integer~$g\geq 3$, a bipartite graph $G'$ of girth at least $g$ and maximum degree at most $f(d)$, such that $G$ has a $d$-cut if and only if $G'$ has a $d$-cut.
\end{proof}

\noindent
{\bf Remark 2.} Since it is not possible to give the smallest prime $p$ satisfying conditions (i) and (ii) in the proof of Theorem~\ref{t-gen}, we can merely state that the maximum degree of $G'$ is bounded by some function $f(d)$ that only depends on $d$.

\subsection{Disconnected Perfect Matching}\label{s-32}

We now show that {\sc Disconnected Perfect Matching} is \NP-complete for graphs of arbitrarily large fixed girth and bounded maximum degree. Our proof uses some new ideas, but we note that it is also possible to use a similar approach as in the proof of Theorem~\ref{thm:girth_MC}. However, this will lead to a slightly worse bound on the maximum degree, namely $75$ instead of $74$.

We first define some useful auxiliary graphs.
Fix $g \geq 3$ such that $g$ is divisible by $6$ and \nicefrac{$g$}{$6$} is odd.  
Let $F$ be a $14$-regular immune bipartite graph with girth at least $2(g+1)$ containing a perfect matching. 
We note that $F$ exists by Lemma~\ref{lem:girth_matching_immune}, and as $F$ has constant size, $F$ can be found in constant time. 

Let $s$ and $t$ be two designated vertices in $F$ at distance at least $g+1$. We fix a perfect matching $M$ of $F$. Let $x \in N_{F}(s)$ and $y \in N_{F}(t)$ be the (unique) neighbours of $s$ and $t$ in $M$. Then, since $s$ and $t$ are at distance at least $g+1$, $x$ and $y$ are at distance at least $g-1$. We add the edge $xy$ and denote the resulting graph by $F(s,t)$; see also Figure~\ref{fig:girthgadget}a). 

We make the following observation.

\begin{lemma}\label{l-fst}
The graph $F(s,t)$ is immune, bipartite and has girth at least $g$. Moreover, both $F(s,t)$ and $F(s,t)-\{s,t\}$ contain a perfect matching.
\end{lemma}

\begin{proof}
As $F$ is immune, $F(s,t)$ is immune. By our choice of $x$ and $y$, we find that $F(s,t)$ is bipartite and has girth at least~$g$. The fixed perfect matching $M$ of $F$ is of course also a perfect matching of $F(s,t)$.
The new edge $xy$ ensures that $F(s,t)-\{s,t\}$ contains a perfect matching as well; indeed, we can take $(M\setminus \{sx,ty\})\cup \{xy\}$. 
\end{proof}

\noindent
We now take $k = \frac{g}{6}$ copies $F(s_1,t_1), \dots, F(s_k,t_k)$ of $F(s,t)$ and identify $s_{i+1}$ with $t_{i}$ for all $i \in \{1, \dots, k-1\}$. We set $s=s_1$ and $t=t_k$ and call the resulting graph $H(s,t)$; see also Figure~\ref{fig:girthgadget}a).

In the following lemma we show some useful properties of $H(s,t)$ and $H(s,t)-\{s,t\}$.

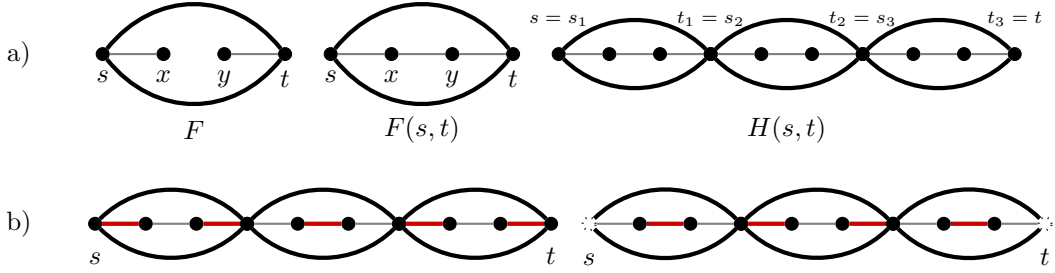
\begin{figure}[t]
\centering
\begin{tikzpicture}

\node[](l) at (-1,2){a)};

\begin{scope}[shift={(0.1,0)}]

\begin{scope}[shift = {(3,2)}, xscale=0.8]
\node[vertex, label=below: $s$](s) at (0,0){};
\node[vertex, label=below: $t$] (t) at (3,0){};
\node[vertex,label=below: $x$](x) at (1,0){};
\node[vertex, label=below: $y$](y) at (2,0){};

\draw[ultra thick, black](s) to [bend left = 45](t);
\draw[ultra thick, black](s) to [bend right = 45](t);
\draw[edge](s)--(x);
\draw[edge](x)--(y);
\draw[edge](y)--(t);

\node[](text) at (1.5, -1){$F(s,t)$};

\end{scope}

\begin{scope}[shift = {(0,2)}, xscale = 0.8]
\node[vertex, label=below: $s$](s) at (0,0){};
\node[vertex, label=below: $t$] (t) at (3,0){};
\node[vertex,label=below: $x$](x) at (1,0){};
\node[vertex, label=below: $y$](y) at (2,0){};

\draw[ultra thick, black](s) to [bend left = 45](t);
\draw[ultra thick, black](s) to [bend right = 45](t);
\draw[edge](s)--(x);
\draw[edge](y)--(t);

\node[](text) at (1.5, -1){$F$};

\end{scope}
\scriptsize

\begin{scope}[shift = {(6,2)}, scale = 2/3]
\begin{scope}[]
\node[vertex,](s) at (0,0){};
\node[](l) at (0,0.7){\strut {$s=s_1$}};
\node[vertex,] (t) at (3,0){};
\node[](l) at (3,0.7){\strut {$t_1=s_2$}};
\node[vertex,](x) at (1,0){};
\node[vertex,](y) at (2,0){};

\draw[ultra thick, black](s) to [bend left = 45](t);
\draw[ultra thick, black](s) to [bend right = 45](t);
\draw[edge](s)--(x);
\draw[edge](x)--(y);
\draw[edge](y)--(t);

\end{scope}
\begin{scope}[shift = {(3,0)}]
\node[vertex](s) at (0,0){};
\node[vertex, ] (t) at (3,0){};
\node[](l) at (3,0.7){\strut {$t_2=s_3$}};
\node[vertex,](x) at (1,0){};
\node[vertex](y) at (2,0){};

\draw[ultra thick, black](s) to [bend left = 45](t);
\draw[ultra thick, black](s) to [bend right = 45](t);
\draw[edge](s)--(x);
\draw[edge](x)--(y);
\draw[edge](y)--(t);

\end{scope}

\begin{scope}[shift = {(6,0)}]
\node[vertex](s) at (0,0){};
\node[vertex,] (t) at (3,0){};
\node[](l) at (3,0.7){\strut {$t_3=t$}};
\node[vertex](x) at (1,0){};
\node[vertex,](y) at (2,0){};

\draw[ultra thick, black](s) to [bend left = 45](t);
\draw[ultra thick, black](s) to [bend right = 45](t);
\draw[edge](s)--(x);
\draw[edge](x)--(y);
\draw[edge](y)--(t);

\end{scope}
\normalsize
\node[](text) at (4.5, -1.5){$H(s,t)$};
\end{scope}

\end{scope}

\normalsize

\node[](l) at (-1,-0.25){b)};
\begin{scope}[shift = {(0,-0.25)}, scale = 2/3]
\begin{scope}[]
\node[vertex,label=below:\strut{$s$}, ](s) at (0,0){};
\node[vertex] (t) at (3,0){};
\node[vertex,](x) at (1,0){};
\node[vertex,](y) at (2,0){};

\draw[ultra thick, black](s) to [bend left = 45](t);
\draw[ultra thick, black](s) to [bend right = 45](t);
\draw[gedge](s)--(x);
\draw[edge](x)--(y);
\draw[gedge](y)--(t);

\end{scope}
\begin{scope}[shift = {(3,0)}]
\node[vertex](s) at (0,0){};
\node[vertex, ] (t) at (3,0){};
\node[vertex,](x) at (1,0){};
\node[vertex](y) at (2,0){};

\draw[ultra thick, black](s) to [bend left = 45](t);
\draw[ultra thick, black](s) to [bend right = 45](t);
\draw[edge](s)--(x);
\draw[gedge](x)--(y);
\draw[edge](y)--(t);

\end{scope}

\begin{scope}[shift = {(6,0)}]
\node[vertex,](s) at (0,0){};
\node[vertex,label=below:\strut{$t$}, ] (t) at (3,0){};
\node[vertex](x) at (1,0){};
\node[vertex,](y) at (2,0){};

\draw[ultra thick, black](s) to [bend left = 45](t);
\draw[ultra thick, black](s) to [bend right = 45](t);
\draw[gedge](s)--(x);
\draw[edge](x)--(y);
\draw[gedge](y)--(t);

\end{scope}

\end{scope}

\begin{scope}[shift = {(6.5,-0.25)}, scale = 2/3]
\begin{scope}[]
\node[vertex,  label=below:\strut{$s$}, fill=none, dotted, thick](s) at (0,0){};
\node[vertex, ] (t) at (3,0){};
\node[vertex,](x) at (1,0){};
\node[vertex,](y) at (2,0){};

\draw[ultra thick, black](s) to [bend left = 45](t);
\draw[ultra thick, black](s) to [bend right = 45](t);
\draw[edge](s)--(x);
\draw[gedge](x)--(y);
\draw[edge](y)--(t);

\end{scope}
\begin{scope}[shift = {(3,0)}]
\node[vertex](s) at (0,0){};
\node[vertex, ] (t) at (3,0){};
\node[vertex,](x) at (1,0){};
\node[vertex](y) at (2,0){};

\draw[ultra thick, black](s) to [bend left = 45](t);
\draw[ultra thick, black](s) to [bend right = 45](t);
\draw[gedge](s)--(x);
\draw[edge](x)--(y);
\draw[gedge](y)--(t);

\end{scope}

\begin{scope}[shift = {(6,0)}]
\node[vertex](s) at (0,0){};
\node[vertex,  label=below:\strut{$t$}, fill=none, dotted, thick] (t) at (3,0){};
\node[vertex](x) at (1,0){};
\node[vertex,](y) at (2,0){};

\draw[ultra thick, black](s) to [bend left = 45](t);
\draw[ultra thick, black](s) to [bend right = 45](t);
\draw[edge](s)--(x);
\draw[gedge](x)--(y);
\draw[edge](y)--(t);

\end{scope}

\end{scope}

\end{tikzpicture}
\caption{a) Illustration of the graphs $F$ (left), $F(s,t)$ (middle) and $H(s,t)$ (right). b) Illustration of how the edges in the perfect matching of $H(s,t)$ (left) resp. $H(s,t)-\{s,t\}$ (right) are chosen (these edges are represented as red, thick edges).\label{fig:girthgadget}}
\end{figure}  

\begin{lemma} \label{lem:girthgadget}
The graph $H(s,t)$ is immune, bipartite, and has girth at least~$g$. Moreover, $\dist(s,t) \geq \frac{g}{2}$, and both $H(s,t)$ and $H(s,t)-\{s,t\}$ contain a perfect matching.
\end{lemma}

\begin{proof}
By taking two immune graphs and identifying two of their vertices (one from each graph), we obtain another immune graph. The graph $H(s,t)$ is obtained by repeatedly applying this operation on copies of $F(s,t)$, which is immune by Lemma~\ref{l-fst}. Hence, we conclude that $H(s,t)$ is immune.

By Lemma~\ref{l-fst}, every graph $F(s_i,t_i)$ for $i \in \{1,\dots, k\}$ is bipartite and has girth at least~$g$. By constructing $H(s,t)$ from these graphs $F(s_i,t_i)$, we do not create any new cycles. Thus, $H(s,t)$ also is bipartite and has girth at least $g$. 
In every graph $F(s_i,t_i)$ we have $\dist(s_i, t_i) = 3$, thus $\dist(s,t) = 3\times \frac{g}{6} = \frac{g}{2}$.

To see that $H(s,t)$ and $H(s,t) - \{s,t\}$ contain both a perfect matching, we will use the fact that both $F(s_i,t_i)$ and $F(s_i,t_i) -\{s_i, t_i\}$ contain a perfect matching by Lemma~\ref{l-fst}. For $H(s,t)$, we take a perfect matching of $F(s_1,t_1)$, $F(s_2,t_2) - \{s_2,t_2\}$, $F(s_3,t_3)$ and continue this alternation until $F(s_k,t_k)$, see Figure~\ref{fig:girthgadget}b (left). For $H(s,t) - \{s,t\}$ we alternate as well but we start and end with $F(s_1,t_1) - \{s_1,t_1\}$ and $F(s_k,t_k) - \{s_k,t_k\}$, see Figure~\ref{fig:girthgadget}b (right).
\end{proof}

\noindent
We are now ready to prove the following result.

\begin{theorem}\label{thm:girth_DPM}
For every integer $g \geq 3$, {\sc Disconnected Perfect Matching} is \NP-complete for bipartite graphs of girth at least~$g$ and maximum degree at most~$74$. 
\end{theorem}

\begin{proof}
Let $g\geq 3$. 
We reduce from {\sc Matching Cut} for bipartite graphs of girth at least $g$ and maximum degree at most~$60$, which is \NP-complete by Theorem~\ref{thm:girth_MC}. 
Similar to Theorem~\ref{thm:girth_MC}, we may assume without loss of generality that $g$ is divisible by $12$. Let $G$ be a bipartite graph of girth at least $g$ and maximum degree~$60$. We construct a graph $G'$ by taking two copies $G_1$ and $G_2$ of $G$, where we connect every vertex $v \in V(G_1)$ and its copy $v' \in V(G_2)$ using the graph $H(v,v')$.

To see that $G'$ has girth at least $g$, we consider first $G_1$ and $G_2$, which both have girth at least $g$. Any cycle containing vertices from both copies has to pass twice through a graph $H(s,t)$. Thus, it will always have length at least $g$, and so $G'$ has girth $g$.
Every vertex inside $H(s,t)$ has degree at most $28$, whereas $s$ and $t$ only have degree $14$.
The degree of a vertex $v \in V(G_1) \cup V(G_2)$ is the degree of the vertex in the original graph $G$ plus the degree in the graph $H(v,v')$.  
Thus, the degree of $v$ is at most~$74$. 

We also claim that $G'$ is bipartite. For a contradiction, assume that $G'$ has an odd cycle~$C$. As $G_1$ and $G_2$ are both bipartite,  $C'$ must contain vertices from $G_1$ and $G_2$. 
Note that $C$ passes through an even number of graphs $H(v,v')$, where $v \in V(G_1)$ and $v' \in V(G_2)$.
Hence, the number of edges in $E(G)\setminus(E(G_1) \cup E(G_2))$ is even. 
Since the graph $H(v,v')$ always connects a vertex $v$ in $G_1$ and its copy $v'$ in $G_2$ we can find an odd cycle $C'$ in $G_1$, consisting of the edges in $C \cap E(G_1)$ and the edges in $E(G_1)$ corresponding to the edges from $C \cap E(G_2)$, a contradiction.

Finally, we show that $G$ admits a matching cut if and only if $G'$ admits a disconnected perfect matching. Consider some vertex $v \in V(G_1)$ and its copy $v' \in V(G_2)$. Since $v$ and $v'$ are connected by the graph $H(v,v')$, which is immune, $v$ and $v'$ will always have the same colour in any valid red-blue colouring of $G'$. Thus, $G_1$ and $G_2$ will be coloured the same in any valid red-blue colouring. Now, if $G$ admits a perfect-extendable red-blue colouring, then~$G$ admits a valid red-blue colouring, as it suffices to colour $G$ the same as $G_1$ (or $G_2$). 

Conversely, if $G$ admits a valid red-blue colouring, then we obtain a perfect-extendable colouring of $G'$ as follows. We colour $G_1$ and $G_2$ the same as $G$. Notice that we colour the immune graphs connecting two copies of the same vertex such that they are monochromatic. This gives us a valid red-blue colouring of $G'$, i.e.\ a matching cut $M$ in $G'$. It remains to show that the matching cut is contained in a perfect matching of $G'$. Since the colourings of $G_1$ and $G_2$ are the same, we have that whenever a blue vertex $v \in V(G_1)$ is matched with a red vertex $u\in V(G_1)$, i.e.\ $vu\in M$, then their copies $v'\in V(G_2)$ and $u'\in V(G_2)$ are matched as well, i.e.\ $v'u'\in M$. By Lemma~\ref{lem:girthgadget}, we know that $H(v,v')-\{v,v'\}$ and $H(u,u')-\{u,u'\}$ contain both a perfect matching which we may add to $M$. For every $v \in V(G_1)$ with no neighbour of the other colour, we know that its copy $v' \in V(G_2)$ has no neighbour of the other colour either. Thus, we can use that $H(v,v')$ contains a perfect matching by Lemma~\ref{lem:girthgadget} and add it to $M$. Repeatedly doing this yields a perfect matching of $G'$ containing~$M$.
\end{proof} 

\subsection{Perfect Matching Cut}\label{s-33}

In any perfect red-blue colouring of a connected graph $G=(V,E)$, a vertex $v\in V$ of degree~$2$ has exactly one neighbour coloured the same as itself and exactly one neighbour coloured differently than itself. One can use this observation to prove the following lemma. This lemma was implicit in~\cite{LT21}. Its proof is short, and we include it for completeness.
 
\begin{lemma}[\cite{LT21}]\label{lem:subdiv}
Let $G=(V',E')$ be the graph obtained from a connected graph~$G$ by $4$-subdividing an edge $e$ of $G$.  
Now, $G$ has a perfect matching cut if and only if $G'$ has a perfect matching cut.
\end{lemma}

\begin{proof}
Let $e=uv$ and denote the resulting path in $G'$ by $uu_1u_2u_3u_4v$.
First suppose $G$ has a perfect matching cut, so $G$ has a perfect red-blue colouring~$c$. If $u$ and $v$ are coloured alike, say $u$ and $v$ are red,
then we colour $u_1$ red, $u_2$ blue, $u_3$ blue and $u_4$ red. Else we may assume that $u$ is red and $v$ is blue. In that case we colour $u_1$ blue, $u_2$ blue, $u_3$ red and $u_4$ red. In both cases we obtain a perfect red-blue colouring~$c'$ of $G'$. So $G'$ has a perfect matching cut.

Now suppose that $G'$ has a perfect matching cut, so $G'$ has a perfect red-blue colouring~$c'$. 
If $u$ and $v$ are coloured alike, say $u$ and $v$ are red, then $u_1$ must be red, $u_2$ blue, $u_3$ blue and $u_4$ red. Hence, the restriction of $c'$ to $V$ is a perfect red-blue colouring of $G$. Else we may assume that $u $ is red and $v$ is blue. In that case $u_1$ must be blue, $u_2$ blue, $u_3$ red and $u_4$ red. Again, the restriction of $c'$ to $V$ is a perfect red-blue colouring of $G$. Hence, in both cases we find that $G$ has a perfect matching cut.
\end{proof}

\noindent
A simple proof for showing that {\sc Perfect Matching Cut} is \NP-complete for graphs of girth at least~$g$ is to apply Lemma~\ref{lem:subdiv}, say, $g$ times on each edge of the input graph. Recall that the more involved gadget of Le and Telle~\cite{LT21} yields
\NP-completeness even for subcubic bipartite graphs of girth at least $g$.

\section{Hardness for Forbidden Subdivided H-Graphs}\label{s-h}

We show that {\sc Matching Cut} (Section~\ref{s-41}), {\sc Disconnected Perfect Matching} (Section~\ref{s-42}) and {\sc Perfect Matching Cut} (Section~\ref{s-43}) are \NP-complete for $(H_1^*,\ldots,H_i^*)$-free graphs, for every $i\geq 1$. 

\subsection{Matching Cut}\label{s-41}

Let $uv$ be an edge of a graph~$G$. We define an edge operation as displayed in Figure~\ref{fig:gadget2}, which when applied on $uv$ will replace $uv$ in~$G$ by the subgraph $T_{uv}^i$. Note that in the new graph, the only vertices from $T_{uv}^i$ that may have neighbours outside $T_{uv}^i$ are $u$ and $v$.

\begin{figure}[ht]
\centering
\scalebox{1.2}{
\begin{tikzpicture}

\node[vertex, label = below:$u$](u) at (0,1) {};
\node[vertex, label =  below:$v$](v) at (1,1) {};

\draw[edge](u)--(v);
\draw[edge](u)--(-0.35,0.7);
	\draw[edge](u)--(-0.5,1);
	\draw[edge](u)--(-0.35,1.3);
	
	\draw[edge](v)--(1.4, 1.2);
	\draw[edge](v)--(1.4, 0.8);
	
	\draw [arrows = {-Stealth[reversed, reversed]}, thick](2,1)--(3,1);

\begin{scope}[shift = {(4,-0.75)}, xscale = 1.2, yscale = 1.2]
\def\k{0.8}
\footnotesize

\node[vertex, label=$v_{2i-1}$](v2j1) at (0.5,2+\k){};
\node[vertex, label=$v_{2i-3}$](v2j3) at (1.5,2+\k){};
\node[vertex, label=$v_{3}$](v3) at (2.5,2+\k){};
\node[vertex, label=$v_{1}$](v1) at (3.5,2+\k){};

\node[vertex, label=left:$v_{2i}$](v2j) at (0,2){};
\node[vertex, label=below:$v_{2i-2}$](v2j2) at (1,2){};
\node[vertex, label=below:$v_{4}$](v4) at (2,2){};
\node[vertex, label=below:$v_{2}$](v2) at (3,2){};
\node[vertex, label=above:$v$](v) at (4,2){};

\node[vertex, label=below:$u$](u) at (0,1){};
\node[vertex, label=above:$u_{2}$](u2) at (1,1){};
\node[vertex, label=above:$u_{4}$](u4) at (2,1){};
\node[vertex, label=above:$u_{2i-2}$](u2j2) at (3,1){};
\node[vertex, label=right:$u_{2i}$](u2j) at (4,1){};

\node[vertex, label=below:$u_{1}$](u1) at (0.5, 1-\k){};
\node[vertex, label=below:$u_{3}$](u3) at (1.5, 1-\k){};
\node[vertex, label=below:$u_{2i-3}$](u2j3) at (2.5, 1-\k){};
\node[vertex, label=below:$u_{2i-1}$](u2j1) at (3.5, 1-\k){};

\draw[edge](u)--(-0.35,0.7);
	\draw[edge](u)--(-0.5,1);
	\draw[edge](u)--(-0.35,1.3);
		\draw[edge](v)--(4.4, 2.2);
	\draw[edge](v)--(4.4, 1.8);

\draw[edge](v2j1)--(v2j3);
\draw[edge, dotted](v2j3)--(v3);
\draw[edge](v3)--(v1);

\draw[edge](v2j)--(v2j2);
\draw[edge, dotted](v2j2)--(v4);
\draw[edge](v4)--(v2);
\draw[edge](v2)--(v);

\draw[edge](u)--(u2);
\draw[edge](u2)--(u4);
\draw[edge, dotted](u4)--(u2j2);
\draw[edge](u2j2)--(u2j);

\draw[edge](u1)--(u3);
\draw[edge, dotted](u3)--(u2j3);
\draw[edge](u2j3)--(u2j1);

\draw[edge](v2j)--(v2j1);
\draw[edge](v2j1)--(v2j2);
\draw[edge](v2j2)--(v2j3);
\draw[edge](v4)--(v3);
\draw[edge](v3)--(v2);
\draw[edge](v2)--(v1);
\draw[edge](v1)--(v);

\draw[edge](v2j)--(u);
\draw[edge](v)--(u2j);

\draw[edge](u2j)--(u2j1);
\draw[edge](u2j1)--(u2j2);
\draw[edge](u2j2)--(u2j3);
\draw[edge](u4)--(u3);
\draw[edge](u3)--(u2);
\draw[edge](u2)--(u1);
\draw[edge](u1)--(u);
\end{scope}

\end{tikzpicture}
}
\caption{The edge $uv$ (left) which we replace by the subgraph $T_{uv}^i$ (right).}\label{fig:gadget2}
\end{figure}
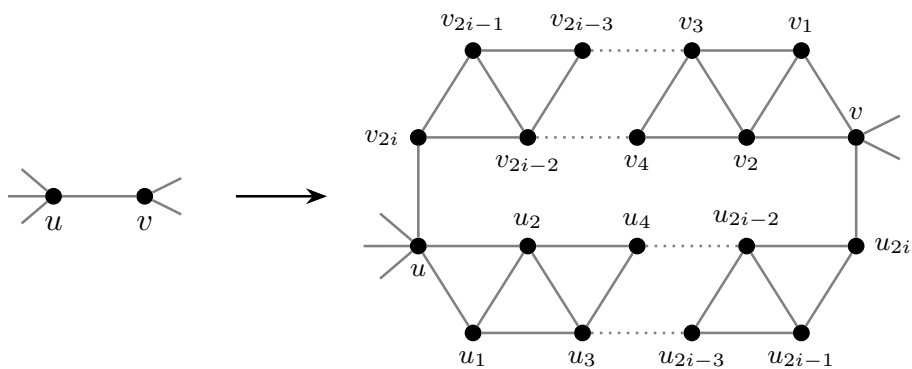

\begin{theorem}\label{thm:subdividedH_MC}
For all $i\geq 1$, {\sc Matching Cut} is \NP-complete for $(H_1^*,\ldots,H_i^*)$-free graphs.
\end{theorem}

\begin{proof}
Fix  $i\geq 1$. Reduce from {\sc Matching Cut}. Let $G=(V,E)$ be a connected graph. Replace every $uv\in E$ by the graph $T_{uv}^i$ (see Figure~\ref{fig:gadget2}). This yields the graph $G'=(V',E')$.

We claim that $G'$ is $(H_{1}^*,\ldots,H_i^*)$-free. For a contradiction, assume that $G'$ contains an induced $H_{i'}^*$ for some $1\leq i'\leq i$.
Then $G'$ contains two vertices $x$ and $y$ that are centers of an induced claw, as well as an induced path from $x$ to $y$ of length $i'$. All vertices in $V'\setminus V$ are not centers of any induced claw. Hence, $x$ and $y$ belong to $V$. By construction, any shortest path between two vertices of $V$ has length at least $i+1$ in $G'$, a contradiction.

We claim that $G'$ has a matching cut if and only if $G$ has a matching cut. First suppose $G'$ has a matching cut $M'$, so $G'$ has a valid red-blue colouring $c'$. We prove a claim for $G'$:

\begin{claim1}
\label{claim-gadget2}
For every edge $uv\in E(G)$ it holds that
\begin{itemize}
\item[(a)] either $c'(u)=c'(v)$, and then $T_{uv}^i$ is monochromatic, or
\item[(b)] $c'(u)\neq c'(v)$, and then $c'$ colours $u_1,\ldots,u_{2i}$ with the same colour as $u$, while $c'$ colours all vertices of $T_{uv}^i-\{u,u_1,\ldots,u_{2i}\}$ with the same colour as $v$, and moreover, $uv_{2i},vu_{2i}\in M'$.
\end{itemize}
\end{claim1}

\noindent
\begin{claimproof}
First assume $c'(u)=c'(v)$, say $c'$ colours $u$ and $v$ red. As any clique of size at least~$3$ is monochromatic, all vertices in $T_{uv}^i$ are coloured red, so $T_{uv}^i$ is monochromatic.

Now assume  $c'(u)\neq c'(v)$, say $u$ is red and $v$ blue. As before, we find that all vertices $u_1,\ldots,u_{2i}$ have the same colour as $u$, so are red, while all vertices $v_1,\ldots,v_{2i}$ have the same colour as $v$, so are blue.
By definition, every edge $xy$ with $c'(x)\neq c'(y)$ belongs to $M'$, so $uv_{2i},vu_{2i}\in M'$.
\end{claimproof}

\noindent
We construct a subset $M\subseteq E$ in $G$ as follows. We add an edge $uv\in E$ to $M$ if and only if $c'(u)\neq c'(v)$ in $G'$.
We now show that $M$ is a matching in $G$. Let $u\in V$. For a contradiction, suppose that $M$ contains edges $uv$ and $uw$ for $v\neq w$. Then $c'(u)\neq c'(v)$ and $c'(u)\neq c'(w)$. By Claim~\ref{claim-gadget2}, we find that $M'$ matches $u$ in $G'$ to vertices in $T_{uv}^i$ and $T_{uw}^i$, contradicting our assumption that $M'$ is a matching (cut). Hence, $M$ is a matching.

Now let $c$ be the restriction of $c'$ to $V$. If $c$ colours every vertex of $G$ with one colour, say red, then $c'$ would also colour every vertex of $G'$ red by Claim~\ref{claim-gadget2}, contradicting the validity of $c'$. Hence, $c$ uses both colours.
Moreover, for every $uv\in E$, the following holds: if $c(u)\neq c(v)$, then $c'(u)\neq c'(v)$ and thus $uv\in M$. Hence, as $M$ is a matching, $c$ is valid, and thus $M$ is a matching cut of $G$.

Conversely, assume that $G$ admits a matching cut, so $V$ has a valid red-blue colouring $c$. 
We construct a red-blue colouring $c'$ of $V'$ as follows.

\begin{itemize}
\item For every edge $uv\in E$ with $c(u)=c(v)$, we let $c'(x)=c(u)$ for every $x\in V(T_{uv}^i)$. 
\item For every edge $uv\in E$ with $c(u)\neq c(v)$, we let $c'(u)=c'(u_1)=\cdots = c'(u_{2i})=c(u)$ and $c'(v)=c'(v_1)=\cdots =c'(v_{2i})=c(v)$.\\[-10pt]
\end{itemize}

\noindent
As $c$ is valid, $c$ uses both colours and thus by construction, $c'$ uses both colours. Let $u\in V$. Again as $c$ is valid,
$c(u)\neq c(v)$ holds for at most one neighbour $v$ of $u$ in $G$. Hence, by construction, $u$ belongs to at most one non-monochromatic gadget $T_{uv}^i$. Thus, $c'$ colours in $G'$ at most one neighbour of $u$ with a different colour than $u$.
Let $u\in V'\setminus V$. By construction, we find again that $c'$ colours at most one neighbour of $u$ with a different colour than $u$. 
Hence, $c'$ is valid, and so $G'$ has a matching cut.
This completes the proof of Theorem~\ref{thm:subdividedH_MC}.
\end{proof}

\subsection{Disconnected Perfect Matching}\label{s-42}

\begin{figure}[ht]
\centering
\scalebox{1.2}{
\begin{tikzpicture}

\node[vertex, label = below:$u$](u) at (0,1) {};
\node[vertex, label =  below:$v$](v) at (1,1) {};

\draw[edge](u)--(v);
\draw[edge](u)--(-0.35,0.7);
	\draw[edge](u)--(-0.5,1);
	\draw[edge](u)--(-0.35,1.3);
	
	\draw[edge](v)--(1.4, 1.2);
	\draw[edge](v)--(1.4, 0.8);
	
	\draw [arrows = {-Stealth[reversed, reversed]}, thick](2,1)--(3,1);

\begin{scope}[shift = {(4,0.5)}]

\footnotesize
	\node[vertex, label = $u$](u) at (0,1) {};
	\node[vertex, label = below:$u'$](up) at (0,0) {};
	\node[vertex, label = $v_{i+1}$](vj) at (1,1) {};
	\node[vertex, label =  below:$v_{2i+2}$](v2j) at (1,0) {};
	\node[vertex, label = $v_{i}$](vj1) at (2,1) {};
	\node[vertex, label =  below:$v_{2i+1}$](v2j1) at (2,0) {};
	\node[vertex, label = $v_{i-1}$](vj2) at (3,1) {};
	\node[vertex, label =  below:$v_{2i}$](v2j2) at (3,0) {};	
	
	\node[vertex, label = $v_2$](v2) at (4,1) {};
	\node[vertex, label =  below:$v_{i+3}$](v2js) at (4,0) {};	
	\node[vertex, label = $v_1$](v1) at (5,1) {};
	\node[vertex, label =  below:$v_{i+2}$](v1j) at (5,0) {};
	\node[vertex, label = $v'$](vp) at (6,1) {};
	\node[vertex, label =  below:$v$](v) at (6,0) {};
	
	\draw[edge](u)--(up);
	\draw[edge](u)--(vj);
	\draw[edge](up)--(v2j);
	\draw[edge](vj)--(vj1);
	\draw[edge](vj)--(v2j);
	\draw[edge](v2j)--(vj1);
	\draw[edge](v2j)--(v2j1);
	\draw[edge](vj1)--(v2j1);
	\draw[edge](v1)--(vp);
	\draw[edge](v1)--(v1j);
	\draw[edge](v1j)--(vp);
	\draw[edge](v1j)--(v);
	\draw[edge](vp)--(v);
	\draw[edge](vj1)--(vj2);
	\draw[edge](v2j1)--(v2j2);
	\draw[edge](v2j1)--(vj2);
	\draw[edge](vj2)--(v2j2);
	\draw[edge](v2)--(v1);
	\draw[edge](v2)--(v2js);
	\draw[edge](v2js)--(v1);
	\draw[edge](v2js)--(v1j);

	\draw[gray,  thick, dotted](v2j2)--(v2js);
	\draw[gray,  thick, dotted](vj2)--(v2);
	
	\draw[br](6.1,-0.6)--(0.7,-0.6);

	\node[](i) at (3.5, -0.9){$i+1$ diamonds};
	
	\draw[edge](u)--(-0.35,0.7);
	\draw[edge](u)--(-0.5,1);
	\draw[edge](u)--(-0.35,1.3);
	
	\draw[edge](v)--(6.4, 0.2);
	\draw[edge](v)--(6.4, -0.2);
	\end{scope}

\end{tikzpicture}}
\caption{The edge $uv$ (left) which we replace by the subgraph $G_{uv}^i$ (right).}\label{fig:gadget}
\end{figure}
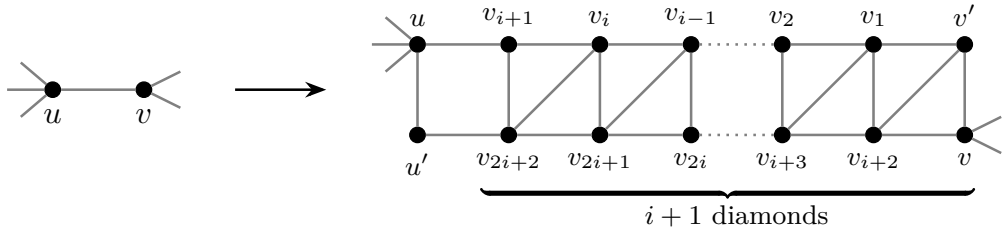

\noindent
We show our next result in a similar way as Theorem~\ref{thm:subdividedH_MC}. First we define the edge operation displayed in Figure~\ref{fig:gadget}. This operation replaces an edge $uv$ in a graph $G$ by the subgraph $G_{uv}^i$ for some integer $i\geq 1$. Note that in the resulting graph, the only vertices from $G_{uv}^i$ that may have neighbours outside $G_{uv}^i$ are $u$ and $v$.

\begin{theorem}\label{thm:subdividedH_DPM}
For every $i\geq 1$, {\sc Disconnected Perfect Matching} is \NP-complete for $(H_1^*,\ldots,H_i^*)$-free graphs.
\end{theorem}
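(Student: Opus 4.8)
The plan is to follow exactly the template established in the proof of Theorem~\ref{thm:subdividedH_MC}, but now reducing from {\sc Disconnected Perfect Matching} itself rather than from {\sc Matching Cut}, and using the gadget $G_{uv}^i$ of Figure~\ref{fig:gadget} in place of $T_{uv}^i$. Given an instance $G=(V,E)$, I would replace every edge $uv\in E$ by a copy of $G_{uv}^i$ to obtain $G'=(V',E')$, and claim that $G$ has a disconnected perfect matching if and only if $G'$ does. The $(H_1^*,\ldots,H_i^*)$-freeness argument should carry over almost verbatim: since the only vertices of $G_{uv}^i$ that can be centres of an induced claw are the old vertices of $V$ (the internal ``diamond'' vertices each lie in triangles that prevent them from being claw centres with three independent neighbours), and since the gadget forces any shortest path between two vertices of $V$ to have length at least $i+1$, no induced $H_{i'}^*$ with $i'\le i$ can appear. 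I would state this as the first paragraph after the construction.

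The heart of the proof is a gadget analysis analogous to Claim~\ref{claim-gadget2}, but now phrased in terms of \emph{perfect-extendable} red-blue colourings (Observation~\ref{o}(ii)) rather than merely valid ones. The key structural claim I would prove is: for any perfect-extendable red-blue colouring $c'$ of $G'$, and any $uv\in E$, either (a) $c'(u)=c'(v)$ and $G_{uv}^i$ is monochromatic, or (b) $c'(u)\ne c'(v)$, in which case the colouring of the gadget is forced into a specific ``split'' pattern along the chain of diamonds. The extra content compared to the {\sc Matching Cut} case is that I must verify the gadget admits the required perfect matchings in \emph{both} scenarios: in the monochromatic case, $G_{uv}^i$ together with the contributions at $u,v$ must contain a perfect matching not using the cut; in the bichromatic case, the restriction of the matching cut to $G_{uv}^i$ must extend to a perfect matching of the whole gadget. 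This is precisely where the diamond structure of Figure~\ref{fig:gadget} is designed to help, since each diamond is a $K_4$-minus-an-edge that can be matched internally in two ways, giving the flexibility to absorb whichever parities are dictated by the colours of $u$ and $v$.

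For the two directions of the equivalence I would argue as follows. If $G$ has a disconnected perfect matching, take the corresponding perfect-extendable colouring $c$ of $V$; extend it to $c'$ on $V'$ by colouring each monochromatic gadget with its common colour and each bichromatic gadget with the forced split pattern, and then show that the original perfect matching on $V$ together with the internal gadget matchings (chosen according to case (a) or (b) above) forms a perfect matching of $G'$ containing the induced matching cut. Conversely, given a perfect-extendable colouring $c'$ of $G'$, restrict it to $V$ and use the gadget claim to show this restriction is valid, uses both colours, and is perfect-extendable: the key point is that whenever $c'(u)\ne c'(v)$ the gadget forces $u$ and $v$ to be matched \emph{outside} the cut within their respective monochromatic sides, and whenever $c'(u)=c'(v)$ the vertices $u,v$ are matched consistently inside $G$, so that $G[R\setminus R']$ and $G[B\setminus B']$ inherit perfect matchings from $G'$.

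The main obstacle I anticipate is the bookkeeping in the forced bichromatic pattern (case (b)): I must make sure the specific labelling of the diamond vertices $v_1,\ldots,v_{2i+2}$ and $u,u'$ in Figure~\ref{fig:gadget} is such that the cut edges form a genuine matching \emph{and} the remaining gadget edges admit a perfect matching on each colour class, for every parity of $i$. Unlike the {\sc Matching Cut} reduction, where I only needed validity, here every vertex of the gadget must end up matched, so a single mis-parity in the chain of diamonds would break the construction; verifying that the two alternating internal matchings of the diamonds always close up correctly at both ends (at the $u,u'$ side and at the $v',v$ side) is the delicate step. I would handle this by exhibiting the explicit perfect matchings of $G_{uv}^i$ and of $G_{uv}^i-\{u,v\}$ directly in the gadget claim, much as Lemma~\ref{lem:girthgadget} exhibits the two matchings of $H(s,t)$, and then invoking them mechanically in both directions of the equivalence.
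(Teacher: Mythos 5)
Your overall plan coincides with the paper's proof: reduce from {\sc Disconnected Perfect Matching} itself, replace each edge by $G_{uv}^i$, prove a gadget claim in terms of perfect-extendable colourings, and exhibit explicit perfect matchings of the gadget in each case. However, your freeness argument contains a concrete false step. You assert that the only claw centres in $G'$ are the old vertices of $V$, because the internal diamond vertices ``each lie in triangles that prevent them from being claw centres with three independent neighbours.'' This is not true for this gadget: the vertex $v_{2i+2}$ (the bottom vertex of the first diamond, adjacent to $u'$) has the three pairwise non-adjacent neighbours $u'$, $v_{i+1}$ and $v_{2i+1}$, so it \emph{is} the centre of an induced claw. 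Lying in a triangle does not prevent a vertex from having three independent neighbours elsewhere. The paper's proof explicitly isolates this case and rules it out with a dedicated argument: if $x=v_{2i+2}$ were one end of an induced $H_{i'}^*$, then since $G[\{u,u',v_{2i+2},v_{2i+1},v_{i+1}\}]$ contains an induced $C_4$, the connecting induced path cannot pass through $u$, so it must reach $v$, and the distance from $v_{2i+2}$ to $v$ is $i+1>i'$. Without this case analysis your freeness paragraph simply fails, since it starts from a false premise.

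Two further points. First, you correctly identify the parity of $i$ in the diamond chain as the delicate step, but you propose to verify it ``for every parity of $i$,'' whereas the natural alternating matching $v'v_1,v_2v_3,\dots,v_{2i-2}v_{2i-1},v_{2i}u'$ used in the monochromatic, $uv\notin M$ case only closes up when $i$ is odd (for even $i$ it would require the non-edge $v_iv_{i+1}$). The paper sidesteps this entirely with a one-line trick you should adopt: since $(H_1^*,\ldots,H_i^*)$-free graphs form a subclass of $(H_1^*,\ldots,H_{i-1}^*)$-free graphs, one may assume without loss of generality that $i$ is odd. Second, your description of the bichromatic case (b) gets the mechanics backwards: the gadget does \emph{not} force $u$ and $v$ to be matched inside their own colour classes. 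In the paper's Claim~\ref{claim-gadget}, when $c'(u)\neq c'(v)$ the vertex $u$ is matched \emph{across} the cut to $v_i$ (a vertex of the gadget coloured like $v$), i.e.\ $uv_i\in M'$, while $v$ is matched within its side ($vv'$ or $vv_{i+1}\in M'$); it is precisely because both $u$ and $v$ are thereby absorbed by this one gadget that the translation rule ``$uv\in M$ iff $c'(u)\neq c'(v)$, or $c'(u)=c'(v)$ and $M'$ matches both $u$ and $v$ inside $G_{uv}^i$'' yields a perfect matching of $G$ containing the cut. As written, your version of the claim would not support that bookkeeping.
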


\begin{proof}
Fix an integer $i\geq 1$. As the class of $(H_1^*,\ldots,H_i^*)$-free graphs is contained in the class of $(H_1^*,\ldots,H_{i-1}^*)$-free graphs if $i\geq 2$, we may assume without loss of generality that $i$ is even (we need this assumption at a later place in our proof). 
 We reduce from {\sc Disconnected Perfect Matching} itself. Let $G=(V,E)$ be a connected graph. 
 
Let $uv$ be an edge of $G$. We recall the edge operation displayed in Figure~\ref{fig:gadget}, which when applied on $uv$ replaces $uv$ by the subgraph $G_{uv}^i$. Recall that in the resulting graph, $u$ and $v$ are the only vertices from $G_{uv}^i$ that may have neighbours outside $G_{uv}^i$.

In $G$ we now replace every edge $uv\in E$ by the graph $G_{uv}^i$. Let $G'=(V',E')$ be the resulting graph. 

We claim that $G'$ is $(H_{1}^*,\ldots,H_i^*)$-free. For a contradiction, assume that $G'$ contains an induced $H_{i'}^*$ for some $1\leq i'\leq i$. Then $G'$ contains two vertices $x$ and $y$ that are centers of an induced claw, as well as an induced path from $x$ to $y$ of length $i'$. The only vertices in $V'\setminus V$ that are the center of an induced claw are the vertices $v_{2i+2}$ (see Figure~\ref{fig:gadget}). Suppose for a contradiction that $x = v_{2i+2}$. Since the graph $G[\{u,u',v_{2i+2}, v_{2i+1}, v_{i+1}\}]$ contains an induced $C_4$, the induced path from $x$ to $y$ may not contain $u$. Thus, either $y = v$ or the path from $x$ to $y$ passes through $v$. Since a shortest path from $v_{2i+2}$ to $v$ has length $i+1$, we get a contradiction.
Hence, $x$ and $y$ belong to $V$. By construction, any shortest path between two vertices of $V$ has length at least $i+3$ in $G'$, a contradiction.

We claim that $G'$ has a disconnected perfect matching if and only if $G$ has a disconnected perfect matching. 
First assume that $G'$ has a disconnected perfect matching $M'$, so $V'$ has a perfect-extendable red-blue colouring $c'$. We prove the following claim for $G'$:

\medskip
\noindent \textcolor{lipicsGray}{\ensuremath{\blacktriangleright}} {\bf \sffamily\bfseries Claim~\ref{thm:subdividedH_DPM}.1.}
For every edge $uv\in E(G)$, it holds that
\begin{itemize}
\item[(a)] either $c'(u)=c'(v)$, and then $G_{uv}^i$ is monochromatic, or
\item[(b)] $c'(u)\neq c'(v)$, and then $c'$ colours $u'$  and all neighbours of $u$ in $G-v$ with the same colour as $u$, while $c'$ colours all vertices of $G_{uv}^i-\{u',u\}$ with the same colour as $v$, and moreover,  $uv_{i+1}\in M'$ and either $vv'\in M'$ or $vv_{i+2}\in M'$.
\end{itemize}

\begin{claimproof}
First assume $c'(u)=c'(v)$, say $c'$ colours $u$ and $v$ red. As any clique of size at least~$3$ is monochromatic, all vertices $v',v_1,\ldots,v_{2i+2}$ are coloured the same as $v$, so they are red. Now $u'$ has two red neighbours, and thus $u'$ must be red as well. Hence, $G_{uv}^i$ is monochromatic.

Now assume  $c'(u)\neq c'(v)$, say $u$ is red and $v$ blue. As before, we find that $v',v_1,\ldots,v_{2i+2}$ are all coloured the same as $v$, so they are blue. Now $u$, which is red, has one blue neighbour, namely $v_{i+1}$, and thus, $u'$ must be red. So $u$ and $u'$ have the same colour.
By definition, every non-monochromatic edge belongs to~$M'$. Hence, $uv_{i+1}\in M'$. Recall that $M'$ is perfect and observe that $|V(G_{uv}^i)|$ is even. As the vertices of $G_{uv}^i-\{u,v\}$ have no neighbours outside $G_{uv}^i-\{u,v\}$, these two facts imply that either $vv'\in M'$ or $vv_{i+2}\in M'$. If  $c'(u)\neq c'(w)$ for some neighbour $w$ of $u$ in $G-v$ then, as we just argued, $M'$ matches~$u$ with a vertex of $G_{uw}^i$, a contradiction, as $M'$ is a matching in $G'$. This proves Claim~\ref{thm:subdividedH_DPM}.1. 
\end{claimproof}

\noindent
We now construct a subset $M\subseteq E$ in $G$ by the following rule: add an edge $uv\in E$ to $M$ if and only if in $G'$, either $c'(u)=c'(v)$ and $M'$ matches both $u$ and $v$ with vertices from $G_{uv}^i$, or $c'(u)\neq c'(v)$.

We now show that $M$ is a perfect matching in $G$. Let $u\in V$. First suppose $u$ belongs in $G'$ to a non-monochromatic graph $G_{uv}^i$. By Claim~\ref{thm:subdividedH_DPM}.1, we have $c'(u)\neq c'(v)$ and $c'(u)=c'(w)$ for every neighbour $w$ of $u$ in $G-v$, and moreover, $M'$ matches both $u$ and $v$ with vertices from $G_{uv}^i$. Hence, $M$ matches $u$ to $v$. As  $c'(u)=c'(w)$ for every neighbour $w$ of $u$ in $G-v$, each $G_{uw}^i$ is monochromatic. However, as $M'$ already matched $u$ to some vertex from $G_{uv}^i$, we find that $M'$ (being a matching) does not match $u$ to any vertices of any $G_{uw}^i$ with $w\neq v$. Hence, our rule does not add any edges $uw$ with $w\neq v$ to $M$. So $M$ matches $u$ only to $v$. 

Now suppose that $u$ only belongs to monochromatic graphs $G_{uv}^i$. By Claim~\ref{thm:subdividedH_DPM}.1, we have $c'(u)= c'(v)$ for every neighbour $v$ of $u$ in $G$.
As $M'$ is a perfect matching, $u$ has exactly one neighbour $v$ in $G$, such that $M'$ matches $u$ with a vertex of $G_{uv}^i$. As $|V(G_{uv}^i)|$ is even and the vertices of $G_{uv}^i-\{u,v\}$ have no neighbours outside $G_{uv}^i-\{u,v\}$, this means that $M'$ also matches $v$ with a vertex from $G_{uv}^i$. Hence, $M$ matches $u$ to $v$ and to no other vertex of $G$. We conclude that $M$ is indeed a perfect matching in $G$.

\begin{figure}[t]
\centering
\begin{subfigure}[]{0.3\textwidth}
\begin{tikzpicture}
\begin{scope}[scale = 0.91]
\node[rvertex, label = $u$](u) at (0,1) {};
	\node[rvertex, label = below:$u'$](up) at (0,0) {};
	\node[rvertex, label = $v_{i+1}$](vi) at (1,1) {};
	\node[rvertex, label =  below:$v_{2i+2}$](v2i) at (1,0) {};
	\node[rvertex, label = $v_{i}$](vi1) at (2,1) {};
	\node[rvertex, label =  below:$v_{2i+1}$](v2i1) at (2,0) {};
	\node[rvertex, label = $v_1$](v1) at (3,1) {};
	\node[rvertex, label =  below:$v_{i+2}$](v1i) at (3,0) {};
	\node[rvertex, label = $v'$](vp) at (4,1) {};
	\node[rvertex, label =  below:$v$](v) at (4,0) {};
	
	\draw[medge](u)--(up);
	\draw[edge](u)--(vi);
	\draw[edge](up)--(v2i);
	\draw[edge](vi)--(vi1);
	\draw[medge](vi)--(v2i);
	\draw[edge](v2i)--(vi1);
	\draw[edge](v2i)--(v2i1);
	\draw[medge](vi1)--(v2i1);
	\draw[edge](v1)--(vp);
	\draw[medge](v1)--(v1i);
	\draw[edge](v1i)--(vp);
	\draw[edge](v1i)--(v);
	\draw[medge](vp)--(v);
	
	\draw[gray,  thick, dotted](v2i1)--(v1i);
	\draw[gray,  thick, dotted](vi1)--(v1);
	\end{scope}
	\end{tikzpicture}
	\caption{$u$ and $v$ have the same colour and $uv \in M$.}
	\end{subfigure}
	\hfill
	\begin{subfigure}[]{0.3\textwidth}
	\begin{tikzpicture}
	\begin{scope}[scale = 0.91]
\node[rvertex, label = $u$](u) at (0,1) {};
	\node[rvertex, label = below:$u'$](up) at (0,0) {};
	\node[rvertex, label = $v_{i+1}$](vi) at (1,1) {};
	\node[rvertex, label =  below:$v_{2i+2}$](v2i) at (1,0) {};
	\node[rvertex, label = $v_{i}$](vi1) at (2,1) {};
	\node[rvertex, label =  below:$v_{2i+1}$](v2i1) at (2,0) {};
	\node[rvertex, label = $v_1$](v1) at (3,1) {};
	\node[rvertex, label =  below:$v_{i+2}$](v1i) at (3,0) {};
	\node[rvertex, label = $v'$](vp) at (4,1) {};
	\node[rvertex, label =  below:$v$](v) at (4,0) {};
	
	\draw[edge](u)--(up);
	\draw[edge](u)--(vi);
	\draw[medge](up)--(v2i);
	\draw[medge](vi)--(vi1);
	\draw[edge](vi)--(v2i);
	\draw[edge](v2i)--(vi1);
	\draw[edge](v2i)--(v2i1);
	\draw[edge](vi1)--(v2i1);
	\draw[medge](v1)--(vp);
	\draw[edge](v1)--(v1i);
	\draw[edge](v1i)--(vp);
	\draw[edge](v1i)--(v);
	\draw[edge](vp)--(v);
	
	\draw[gray,  thick, dotted](v2i1)--(v1i);
	\draw[gray,  thick, dotted](vi1)--(v1);
	\end{scope}
	\end{tikzpicture}
	\caption{$u$ and $v$ have the same colour and $uv \not\in M$.}
	\end{subfigure}
	\hfill
\begin{subfigure}[]{0.3\textwidth}

\begin{tikzpicture}
\begin{scope}[scale = 0.91]
	\node[rvertex, label = $u$](u) at (0,1) {};
	\node[rvertex, label = below:$u'$](up) at (0,0) {};
	\node[bvertex, label = $v_{i+1}$](vi) at (1,1) {};
	\node[bvertex, label =  below:$v_{2i+2}$](v2i) at (1,0) {};
	\node[bvertex, label = $v_{i}$](vi1) at (2,1) {};
	\node[bvertex, label =  below:$v_{2i+1}$](v2i1) at (2,0) {};
	\node[bvertex, label = $v_1$](v1) at (3,1) {};
	\node[bvertex, label =  below:$v_{i+2}$](v1i) at (3,0) {};
	\node[bvertex, label = $v'$](vp) at (4,1) {};
	\node[bvertex, label =  below:$v$](v) at (4,0) {};
	
	\draw[edge](u)--(up);
	\draw[medge](u)--(vi);
	\draw[medge](up)--(v2i);
	\draw[edge](vi)--(vi1);
	\draw[edge](vi)--(v2i);
	\draw[edge](v2i)--(vi1);
	\draw[edge](v2i)--(v2i1);
	\draw[medge](vi1)--(v2i1);
	\draw[edge](v1)--(vp);
	\draw[medge](v1)--(v1i);
	\draw[edge](v1i)--(vp);
	\draw[edge](v1i)--(v);
	\draw[medge](vp)--(v);
	
	\draw[gray,  thick, dotted](v2i1)--(v1i);
	\draw[gray,  thick, dotted](vi1)--(v1);
	\end{scope}
	\end{tikzpicture}
	\caption{$u$ and $v$ have different colours.}
	\end{subfigure}
\vspace*{0.5cm}
\caption{The different colourings and matchings in the graph $G_{uv}^i$.}\label{fig-gadgetcoloured}
\end{figure}
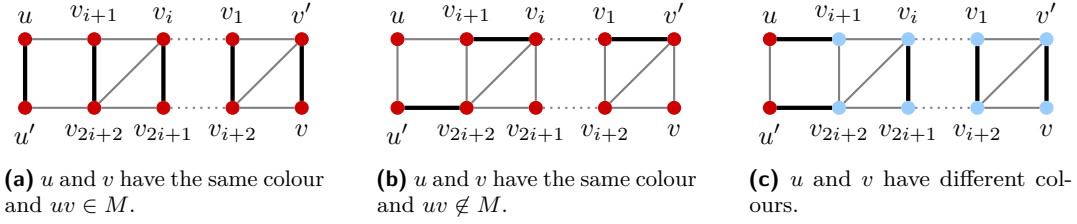

Finally, let $c$ be the restriction of $c'$ to $V$. If $c$ colours every vertex of $G$ with one colour, say red, then $c'$ would also colour every vertex of $G'$ red by Claim~\ref{thm:subdividedH_DPM}.1, contradicting the perfect-extendability (and thus validity) of $c'$. Hence, $c$ uses both colours.
For every $uv\in E$, the following holds: if $c(u)\neq c(v)$, then $c'(u)\neq c'(v)$ and thus $uv\in M$. Hence, as $M$ is a (perfect) matching, $c$ is perfect-extendable. Thus $M$ is a disconnected perfect matching of $G$.

\medskip
\noindent
Now, assume that $G$ has a disconnected perfect matching $M$, so $V$ has a perfect-extendable red-blue colouring $c$. 
We construct a red-blue colouring $c'$ of $V'$ and a matching~$M'$ in $G'$ as follows:

\begin{itemize}
\item For every edge $uv\in E$ with $c(u)=c(v)$, we let $c'(x)=c(u)$ for every $x\in V(G_{uv}^i)$. 
\begin{itemize}
\item If $uv\in M$, then we add $vv',v_1v_{i+2},\ldots,v_{i+1}v_{2i+2},uu'$ to $M'$; see also Figure~\ref{fig-gadgetcoloured}(a).
\item If $uv\not\in M$, then we add $v'v_1, v_2v_3, \dots, v_{2i}v_{2i+1},v_{2i+2}u'$ to $M'$ (recall that $i$ is even, so this is possible); see also Figure~\ref{fig-gadgetcoloured}(b).
\end{itemize}
\item For every edge $uv\in E$ with $c(u)\neq c(v)$, we let $c'(u)=c'(u')=c(u)$ and $c'(x)=c(v)$ for every $x\in V(G_{uv}^i)\setminus \{u,u'\}$. We add $uv_{i+1},u'v_{2i+2}$ to $M'$ as well as $vv',v_1v_{i+2},\ldots, v_{i}v_{2i+1}$; see also Figure~\ref{fig-gadgetcoloured}(c).
\end{itemize}

\noindent
As $c$ is valid, $c$ uses both colours and thus by construction, $c'$ uses both colours. Let $u\in V$. Again as $c$ is valid,
$c(u)\neq c(v)$ holds for at most one neighbour $v$ of $u$ in $G$. Hence, by construction, $u$ belongs to at most one non-monochromatic gadget $G_{uv}^i$. Thus, $c'$ colours in $G'$ at most one neighbour of $u$ with a different colour than $u$.
Let $u\in V'\setminus V$. By construction, we find again that $c'$ colours at most one neighbour of $u$ with a different colour than $u$. 
Hence, $c'$ is valid. Again by construction, $M'$ is a perfect matching containing all edges $xy$ of $G'$ with $c'(x)\neq c'(y)$, and thus, $M'$ is a disconnected perfect matching of $G'$. 
This completes the proof of Theorem~\ref{thm:subdividedH_DPM}.
\end{proof}
 
\subsection{Perfect Matching Cut}\label{s-43}

We apply Lemma~\ref{lem:subdiv} (implicit in~\cite{LT21}) sufficiently times on every edge of a subcubic bipartite graph of girth at least~$g$ and combine this with the \NP-completeness of {\sc Perfect Matching Cut} for the class of subcubic bipartite graph of girth at least~$g$~\cite{LT21}. Note that applying Lemma~\ref{lem:subdiv} does not change the degree of a graph. Hence, Le and Telle essentially proved the following:

\begin{theorem}[\cite{LT21}]\label{thm:subdividedH_PMC}
For every $i\geq 1$ and $g\geq 3$, {\sc Perfect Matching Cut} is \NP-complete for $(H_1^*,\ldots,H_i^*)$-free subcubic bipartite graphs of girth at least~$g$.
\end{theorem}

\section{Consequences and Open Problems}\label{s-con}

We give some consequences of our new results on {\sc Matching Cut}, {\sc Disconnected Perfect Matching} and {\sc Perfect Matching Cut} for $H$-free graphs and $H$-subgraph-free graphs.

\subsection{H-Free Graphs}

We give three up-to-date classifications for $H$-free graphs by combining the results from~\cite{Bo09,BP,Ch84,LL23,LR03,LPR22b,LPR22a,LT21,Mo89} (see Section~\ref{s-known}) with our new results. That is, we took the three state-of-the-art theorems in~\cite{LPR22b} and added both the result for $H_i^*$-free graphs and the result for $(3P_6,2P_7,P_{14})$-free graphs from~\cite{LL23}. For {\sc Disconnected Perfect Matching} we also added the new result for $C_s$-free graphs for even~$s$, as implied by our girth result. 
We also compare the three partial classification with a recent, full classification of the optimization problem {\sc Maximum Matching Cut}~\cite{LPR23}.
We write $G'\ssi G$ if $G'$ is an induced subgraph of~$G$ and $G'\si G$ if $G$ is an induced subgraph of $G'$.

\begin{theorem}\label{t-main1}
For a graph~$H$, {\sc Matching Cut} on $H$-free graphs is 
\begin{itemize}
\item polynomial-time solvable if $H\ssi sP_3+K_{1,3}$ or $sP_3+P_6$ for some $s\geq 0$, and
\item \NP-complete if $H\si K_{1,4}$, $P_{14}$, $3P_5$, $2P_7$, $C_r$ for some $r\geq 3$ or $H_j^*$ for some $j\geq 1$.
\end{itemize}
\end{theorem}

\begin{theorem}\label{t-main2}
For a graph~$H$, {\sc Disconnected Perfect Matching} on $H$-free graphs is 
\begin{itemize}
\item polynomial-time solvable if $H\ssi K_{1,3}$ or $P_5$, and
\item \NP-complete if $H\si K_{1,4}$, $P_{14}$, $3P_6$, $2P_7$, $C_r$ for some $r\geq 3$ or $H_j^*$ for some $j\geq 1$.
\end{itemize}
\end{theorem}

\begin{theorem}\label{t-main3}
For a graph~$H$, {\sc Perfect Matching Cut} on $H$-free graphs is 
\begin{itemize}
\item polynomial-time solvable if $H\ssi sP_4+S_{1,2,2}$ or $sP_4+P_6$ for some $s\geq 0$, and
\item \NP-complete if $H\si K_{1,4}$, $P_{14}$, $3P_6$, $2P_7$, $C_r$ for some $r\geq 3$ or $H_j^*$ for some $j\geq 1$.
\end{itemize}
\end{theorem}

\begin{theorem}\label{t-dichoH}
For a graph~$H$, {\sc Maximum Matching Cut} on $H$-free graphs is 
\begin{itemize}
\item polynomial-time solvable if $H\ssi sP_2+P_6$ for some $s\geq 0$, and
\item \NP-hard if $H\si K_{1,3}$, $2P_3$, $C_r$ for some $r\geq 3$.
\end{itemize}
\end{theorem}

\noindent
A {\it subdivided claw} is a graph obtained from the claw $K_{1,3}$ by subdividing each of its edges zero or more times. 
Let ${\cal S}$ be the class of graphs, each connected component of which is either a path or a {\it subdivided claw}.
From Theorem~\ref{t-main1}, it follows that {\sc Matching Cut} is \NP-complete for $H$-free graphs if $H$ has a cycle, a vertex of degree at least~$4$, or a connected component with two vertices of degree~$3$. Hence,
the remaining open cases for {\sc Matching Cut} on $H$-free graphs are all restricted to cases where $H$ is a graph from ${\cal S}$. The same remark holds for {\sc Disconnected Perfect Matching} due to Theorem~\ref{t-main2}, and for {\sc Perfect Matching Cut} due to Theorem~\ref{t-main3}.

\subsection{H-subgraph-free Graphs}

For a graph~$H$, a graph $G$ is {\it $H$-subgraph-free} if $G$ does not contain $H$ as a subgraph.
Every $H$-subgraph-free graph is $H$-free, 
whereas the reverse direction only holds if $H$ is a complete graph.
For a set ${\cal H}$ of graphs, a graph $G$ is  {\it ${\cal H}$-subgraph-free} if $G$ is $H$-subgraph-free for every $H\in {\cal H}$. 

For an integer $p$, a \textit{$p$-subdivision} of an edge $uv$ in a graph replaces $uv$ by a path from $u$ to~$v$ of length $p+1$.
The {\it $p$-subdivision} of a graph~$G$ is obtained from~$G$ by $p$-subdividing each edge of $G$. 
For a graph class ${\cal G}$, we let ${\cal G}^p$ consist of all the $p$-subdivisions of the graphs in ${\cal G}$.
A graph problem~$\Pi$ is \NP-complete  {\it under edge subdivision of subcubic graphs} if there is an integer~$q\geq 1$ such that the following holds:
if $\Pi$ is \NP-complete for the class~${\cal G}$ of subcubic graphs, then $\Pi$ is \NP-complete for ${\cal G}^{pq}$ for every $p\geq 1$.
Now, $\Pi$ is a {\it C123-problem} if  (C1) $\Pi$ is polynomial-time solvable for graphs of bounded treewidth; (C2) $\Pi$ is \NP-complete for subcubic graphs; and (C3) $\Pi$ is \NP-complete under edge subdivision of subcubic graphs. 

In~\cite{JMOPPSV}, it was shown that for every finite set of graphs ${\cal H}$, any C123-problem $\Pi$ on ${\cal H}$-subgraph-free graphs is polynomial-time solvable if ${\cal H}$ contains a graph from ${\cal S}$ and \NP-complete otherwise.
Le and Telle~\cite{LT21} observed that {\sc Perfect Matching Cut} satisfies C1 and proved C2 and C3 
(see Lemma~\ref{lem:subdiv}). 
Applying the above meta-theorem from~\cite{JMOPPSV} yields:

\begin{theorem}\label{t-dicho}
For any finite set of graphs ${\cal H}$, {\sc Perfect Matching Cut} on ${\cal H}$-subgraph-free graphs is polynomial-time solvable if ${\cal H}$ contains a graph from ${\cal S}$ and \NP-complete otherwise.
\end{theorem}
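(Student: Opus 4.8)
The plan is to establish that {\sc Perfect Matching Cut} is a C123-problem and then invoke the meta-theorem of~\cite{JMOPPSV} directly: once (C1)--(C3) are in place, that result immediately yields the stated dichotomy, namely polynomial-time solvability exactly when ${\cal H}$ contains a member of~${\cal S}$ and \NP-completeness otherwise. Two of the three conditions are already available, so the theorem reduces to a single new ingredient. Condition (C1), polynomial-time solvability on graphs of bounded treewidth, and condition (C2), \NP-completeness on subcubic graphs, were both observed by Le and Telle~\cite{LT21}; membership in \NP\ is immediate. The entire content of the theorem therefore rests on proving (C3), that {\sc Perfect Matching Cut} is \NP-complete under edge subdivision of subcubic graphs, which I would establish with $q=4$.

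For (C3) I would give, for each fixed $p\geq 1$, a reduction from {\sc Perfect Matching Cut} on subcubic graphs to the same problem on the class ${\cal G}^{4p}$ of $4p$-subdivisions of subcubic graphs. The reduction is simply the construction that sends a subcubic graph $G$ to its $4p$-subdivision $G'$, which lies in ${\cal G}^{4p}={\cal G}^{pq}$ by definition and is computable in polynomial time for fixed~$p$. The heart of the argument is the claim that $G$ admits a perfect matching cut if and only if $G'$ does; combined with (C2) this gives \NP-completeness on ${\cal G}^{4p}$ for every $p$, which is exactly (C3) with $q=4$.

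The key structural observation driving the equivalence concerns how a perfect matching cut behaves along a subdivided edge. If an edge $uv$ of $G$ is replaced by a path $u=v_0,v_1,\ldots,v_k,v_{k+1}=v$ with $k$ internal vertices, then in any perfect matching cut of $G'$ each internal vertex has degree~$2$ and so is incident to exactly one cut edge; hence the indicator sequence recording which of the $k+1$ path-edges are cut edges must strictly alternate. This forces the two colour classes to repeat with period~$4$ along the path and, for even~$k$, forces the endpoints to behave symmetrically: either both $u$ and $v$ are matched into the path or neither is. A short parity count then shows that exactly when $k\equiv 0\pmod 4$ do the two admissible patterns coincide with the two behaviours of the original edge---``$uv$ in the matching, endpoints oppositely coloured'' and ``$uv$ not in the matching, endpoints equally coloured''. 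Choosing $k=4p$ thus makes every subdivided edge faithfully simulate its original, and a perfect matching cut of one graph can be read off from one of the other.

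The main obstacle, and the step I would write out most carefully, is verifying both directions of this equivalence simultaneously and ruling out spurious solutions. Concretely, one must check that the locally chosen alternating patterns glue into a globally consistent bipartition and perfect matching: at every original vertex exactly one incident path carries the ``matched-in'' pattern, reproducing the unique matching-edge condition, and both colour classes remain nonempty so that the matching is a genuine cut. The parity analysis also explains why $q=4$ rather than $q=2$ is needed: for $k\equiv 2\pmod 4$ the ``both matched-in'' pattern produces equally coloured endpoints, a configuration with no counterpart in~$G$ that would create perfect matching cuts of $G'$ not arising from any perfect matching cut of~$G$. Eliminating these phantom configurations is precisely what the choice $k\equiv 0\pmod 4$ accomplishes, and once it is in place the reduction---hence (C3), and with it Theorem~\ref{t-dicho} via~\cite{JMOPPSV}---follows.
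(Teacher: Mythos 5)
Your proposal is correct and follows essentially the same route as the paper: both verify the C123 conditions (C1 and C2 from Le and Telle, membership in \NP{} trivially) and establish C3 with $q=4$ via the observation that degree-$2$ vertices in a perfect red-blue colouring force strict alternation of cut edges along subdivided paths, so that $4$-subdivision (and more generally $k\equiv 0 \pmod 4$) faithfully preserves the existence of a perfect matching cut. The only difference is organizational: the paper packages the argument as a single-edge $4$-subdivision lemma (Lemma~\ref{lem:subdiv}) applied repeatedly, which sidesteps the global gluing step you flag as delicate, whereas you subdivide all edges at once and verify consistency directly; your parity analysis for general $k$, including why $k\equiv 2\pmod 4$ creates phantom configurations, matches the mechanism underlying the paper's lemma.
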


\subsection{Open Problems}

Apart from completing the classifications of Theorems~\ref{t-main1}--\ref{t-main3}, we also pose the following open problem.

\begin{open}
Classify the computational complexity of {\sc Matching Cut} and {\sc Disconnected Perfect Matching} for ${\cal H}$-subgraph-free graphs.
\end{open}

\noindent
We note that classifications of {\sc Matching Cut} and {\sc Disconnected Perfect Matching} are unknown even for $H$-subgraph-free graphs (so when we forbid only a single graph $H$ as a subgraph). So far, only a partial classification for {\sc Matching Cut} restricted to ${\cal H}$-subgraph-free graphs has been shown~\cite{JMPPSV}. In particular, the transformations for {\sc Matching Cut} and {\sc Disconnected Perfect Matching} from Section~\ref{s-h} do not decrease the girth and yield graphs with many cycles of varying length as subgraphs. Hence, new techniques are needed.

Finally, with our current technique (Lemma~\ref{lem:girth_matching_immune}) we cannot obtain a better bound on the maximum degree of the graphs of arbitrarily large fixed girth in the proofs of Theorems~\ref{thm:girth_MC} and~\ref{thm:girth_DPM}. 

\begin{open}
Can the two maximum degree bounds in Theorems~\ref{thm:girth_MC} and~\ref{thm:girth_DPM} be improved?
\end{open}

\bibliography{ref}

\begin{thebibliography}{10}

\bibitem{AM85}
Noga Alon and Vitali~D Milman.
\newblock $\lambda$1, isoperimetric inequalities for graphs, and
  superconcentrators.
\newblock {\em Journal of Combinatorial Theory, Series B}, 38:73--88, 1985.

\bibitem{ACGH12}
J{\'u}lio Ara{\'u}jo, Nathann Cohen, Fr{\'e}d{\'e}ric Giroire, and
  Fr{\'e}d{\'e}ric Havet.
\newblock Good edge-labelling of graphs.
\newblock {\em Discrete Applied Mathematics}, 160:2502--2513, 2012.

\bibitem{AKK22}
N.~R. Aravind, Subrahmanyam Kalyanasundaram, and Anjeneya~Swami Kare.
\newblock Vertex partitioning problems on graphs with bounded tree width.
\newblock {\em Discrete Applied Mathematics}, 319:254--270, 2022.

\bibitem{AS21}
N.~R. Aravind and Roopam Saxena.
\newblock An {F}{P}{T} algorithm for {M}atching {C}ut and $d$-{C}ut.
\newblock {\em Proc. {IWOCA} 2021, LNCS}, 12757:531--543, 2021.

\bibitem{BCD23}
Edouard Bonnet, Dibyayan Chakraborty, and Julien Duron.
\newblock Cutting barnette graphs perfectly is hard.
\newblock {\em Proc. {WG} 2023, LNCS}, to appear.

\bibitem{Bo09}
Paul~S. Bonsma.
\newblock The complexity of the {M}atching-{C}ut problem for planar graphs and
  other graph classes.
\newblock {\em Journal of Graph Theory}, 62:109--126, 2009 (conference version:
  WG 2003).

\bibitem{BJ08}
Mieczyslaw Borowiecki and Katarzyna Jesse{-}J{\'{o}}zefczyk.
\newblock Matching cutsets in graphs of diameter~$2$.
\newblock {\em Theoretical Computer Science}, 407:574--582, 2008.

\bibitem{BP}
Valentin Bouquet and Christophe Picouleau.
\newblock The complexity of the {P}erfect {M}atching-{C}ut problem.
\newblock {\em CoRR}, abs/2011.03318, 2020.

\bibitem{CHLLP21}
Chi{-}Yeh Chen, Sun{-}Yuan Hsieh, Ho{\`{a}}ng{-}Oanh Le, Van~Bang Le, and
  Sheng{-}Lung Peng.
\newblock Matching {C}ut in graphs with large minimum degree.
\newblock {\em Algorithmica}, 83:1238--1255, 2021.

\bibitem{Ch84}
Vasek Chv{\'{a}}tal.
\newblock Recognizing decomposable graphs.
\newblock {\em Journal of Graph Theory}, 8:51--53, 1984.

\bibitem{LD37orig}
Peter G.~Lejeune Dirichlet.
\newblock Beweis des {Satzes}, dass jede unbegrenzte arithmetische
  {Progression}, deren erstes {Glied} und {Differenz} ganze {Zahlen} ohne
  gemeinschaftlichen {Faktor} sind, unendlich viele {Primzahlen} enthält.
\newblock {\em Abhandlungen der Königlichen Preußischen Akademie der
  Wissenschaften zu Berlin}, 1837.

\bibitem{Do84}
Jozef Dodziuk.
\newblock Difference equations, isoperimetric inequality and transience of
  certain random walks.
\newblock {\em Transactions of the American Mathematical Society},
  284:787--794, 1984.

\bibitem{FP82}
Arthur~M. Farley and Andrzej Proskurowski.
\newblock Networks immune to isolated line failures.
\newblock {\em Networks}, 12:393--403, 1982.

\bibitem{Fe23}
Carl Feghali.
\newblock A note on {M}atching-{C}ut in ${P}_t$-free graphs.
\newblock {\em Information Processing Letters}, 179:106294, 2023.

\bibitem{GKKL22}
Petr~A. Golovach, Christian Komusiewicz, Dieter Kratsch, and Van~Bang Le.
\newblock Refined notions of parameterized enumeration kernels with
  applications to matching cut enumeration.
\newblock {\em Journal of Computer and System Sciences}, 123:76--102, 2022.

\bibitem{GPS12}
Petr~A. Golovach, Dani{\"{e}}l Paulusma, and Jian Song.
\newblock Computing vertex-surjective homomorphisms to partially reflexive
  trees.
\newblock {\em Theoretical Computer Science}, 457:86--100, 2012.

\bibitem{GS21}
Guilherme Gomes and Ignasi Sau.
\newblock Finding cuts of bounded degree: complexity, {F}{P}{T} and exact
  algorithms, and kernelization.
\newblock {\em Algorithmica}, 83:1677--1706, 2021.

\bibitem{Gr70}
Ronald~L. Graham.
\newblock On primitive graphs and optimal vertex assignments.
\newblock {\em Annals of the New York Academy of Sciences}, 175:170--186, 1970.

\bibitem{HT98}
Pinar Heggernes and Jan~Arne Telle.
\newblock Partitioning graphs into generalized dominating sets.
\newblock {\em Nordic Journal of Computing}, 5:128--142, 1998.

\bibitem{JMOPPSV}
Matthew Johnson, Barnaby Martin, Jelle~J. Oostveen, Sukanya Pandey,
  Dani{\"{e}}l Paulusma, Siani Smith, and Erik~Jan van Leeuwen.
\newblock Complexity framework for forbidden subgraphs {I}: The framework.
\newblock {\em CoRR}, abs/2211.12887, 2022.

\bibitem{JMPPSV}
Matthew Johnson, Barnaby Martin, Sukanya Pandey, Dani{\"e}l Paulusma, Siani
  Smith, and Erik~Jan van Leeuwen.
\newblock Complexity framework for forbidden subgraphs {I}{I}{I}: When problems
  are tractable on subcubic graphs.
\newblock {\em Proc. {MFCS} 2023, LIPIcs}, 272:57:1--57:15, 2023.

\bibitem{KKL20}
Christian Komusiewicz, Dieter Kratsch, and Van~Bang Le.
\newblock Matching {C}ut: Kernelization, single-exponential time {FPT}, and
  exact exponential algorithms.
\newblock {\em Discrete Applied Mathematics}, 283:44--58, 2020.

\bibitem{KL16}
Dieter Kratsch and Van~Bang Le.
\newblock Algorithms solving the {M}atching {C}ut problem.
\newblock {\em Theoretical Computer Science}, 609:328--335, 2016.

\bibitem{LL19}
Hoang-Oanh Le and Van~Bang Le.
\newblock A complexity dichotomy for {M}atching {C}ut in (bipartite) graphs of
  fixed diameter.
\newblock {\em Theoretical Computer Science}, 770:69--78, 2019.

\bibitem{LL23}
Ho{\`{a}}ng{-}Oanh Le and Van~Bang Le.
\newblock Complexity results for matching cut problems in graphs without long
  induced paths.
\newblock {\em Proc. {WG} 2023, LNCS}, to appear.

\bibitem{LR03}
Van~Bang Le and Bert Randerath.
\newblock On stable cutsets in line graphs.
\newblock {\em Theoretical Computer Science}, 301:463--475, 2003.

\bibitem{LT21}
Van~Bang Le and Jan~Arne Telle.
\newblock The {P}erfect {M}atching {C}ut problem revisited.
\newblock {\em Proc. WG 2021, LNCS}, 12911:182--194, 2021.

\bibitem{LPS88}
Alexander Lubotzky, Ralph Phillips, and Peter Sarnak.
\newblock Ramanujan graphs.
\newblock {\em Combinatorica}, 8:261--277, 1988.

\bibitem{LPR22a}
Felicia Lucke, Dani\"el Paulusma, and Bernard Ries.
\newblock On the complexity of {M}atching {C}ut for graphs of bounded radius
  and ${H}$-free graphs.
\newblock {\em Theoretical Computer Science}, 936, 2022.

\bibitem{LPR23}
Felicia Lucke, Dani\"{e}l Paulusma, and Bernard Ries.
\newblock {Dichotomies for Maximum Matching Cut: H-Freeness, Bounded Diameter,
  Bounded Radius}.
\newblock {\em Proc. {MFCS} 2023, LIPIcs}, 272:64:1--64:15, 2023.

\bibitem{LPR22b}
Felicia Lucke, Dani{\"{e}}l Paulusma, and Bernard Ries.
\newblock Finding matching cuts in ${H}$-free graphs.
\newblock {\em Algorithmica}, to appear.

\bibitem{Mo89}
Augustine~M. Moshi.
\newblock Matching cutsets in graphs.
\newblock {\em Journal of Graph Theory}, 13:527--536, 1989.

\bibitem{open}
Open problem garden.
\newblock http://www.openproblemgarden.org/op/matching\textunderscore
  cut\textunderscore and\textunderscore girth.
\newblock (accessed on 22 June 2023).

\bibitem{PP01}
Maurizio Patrignani and Maurizio Pizzonia.
\newblock The complexity of the {M}atching-{C}ut problem.
\newblock {\em Proc. WG 2001, LNCS}, 2204:284--295, 2001.

\bibitem{Sh08}
Harold~N. Shapiro.
\newblock Introduction to the {Theory} of {Numbers}.
\newblock {\em Dover Publications}, 2008.

\end{thebibliography}
\end{document}